\numberwithin{equation}{section}
\newtheorem{theorem}{Theorem}[section]
\newtheorem{lemma}[theorem]{Lemma}
\newtheorem{propos}[theorem]{Proposition}
\theoremstyle{definition}
\newtheorem{definition}[theorem]{Definition}
\newtheorem{proof}{Proof}
\let\origendproof\endproof
\def\endproof{\unskip\nobreak\hskip5pt plus 1fill$\square$\origendproof}
\newtheorem{sled}[theorem]{Corollary}
\newtheorem{rem}[theorem]{Remark}
\def\PGL{\mathop{\rm PGL}\nolimits}
\def\GL{\mathop{\rm GL}\nolimits}
\def\Cr{\mathop{\rm Cr}\nolimits}
\def\Aut{\mathop{\rm Aut}\nolimits}
\begin{document}
\title{Jordan constants of Cremona group of rank 2 over fields of characteristic zero}
\author{A.\,V.~Zaitsev}
\address{National research university ''Higher school of economics'', Laboratory of algebraic geometry, 6 Usacheva str., Moscow, 119048, Russia}
\email{\href{alvlzaitsev1@gmail.com}{alvlzaitsev1@gmail.com}}

\maketitle

\begin{abstract}
    In this paper we compute the Jordan constants of the Cremona group of rank two over all fields of characteristic zero.
\end{abstract}

\section{Introduction}

Groups of birational automorphisms of algebraic varieties are often complicated. For example, finding a set of generators of such groups may be a nontrivial task already in dimension 2. As one of the possible ways to study such complicated groups, one can try to describe their finite subgroups. In particular, one can study the Jordan property of infinite groups.

\begin{definition}[{\cite[Definition 2.1]{Pop}}]\label{def: J.} 
Let $G$ be a finite group. The Jordan constant $J(G)$ of $G$ is the smallest index of a normal abelian subgroup in $G$. Let $\Gamma$ be an arbitrary group. Then $\Gamma$ is called \textit{Jordan} if the value $$ J(\Gamma) = \sup\limits_{G\subseteq \Gamma,\ |G|<\infty }(J(G))$$ is finite. In this case the number $J(\Gamma)$ is called the \textit{Jordan constant} of the group $\Gamma$.
\end{definition}

At the end of the 19th century, Camille Jordan proved that complete linear groups~$\text{GL}_n(K)$ over a field $K$ of characteristic zero are Jordan, see~\cite[\S 40]{Jord} or~\mbox{\cite[Theorem 36.13]{CR}}. The Jordan constants of these groups were computed over algebraically closed fields, much later, in 2004, in the work~\cite{Coll}. As a corollary, all linear algebraic groups are Jordan.

We are also interested in the question of Jordan property of another class of groups of geometric origin, namely the groups of birational automorphisms of projective spaces. These groups are called Cremona groups and are denoted by $\Cr_n(K)$, where $n$ is the dimension of the projective space (it is called the rank of the Cremona group), and $K$~is a base field. Groups $\Cr_n(K)$ are Jordan over fields of characteristic zero. In dimension two, this was proved in the paper~\cite[Theor\'em\`e 3.1]{Serre}, and in arbitrary dimension a similar result was obtained in~\mbox{\cite[Theorem 1.8]{PrShr}} and~\hbox{\cite[Theorem 1.1]{Birkar}}. The situation with Jordan constants of these groups is more complicated than for linear groups. For now, the exact values of the Jordan constants of the group of birational automorphisms of the projective plane have been found over algebraically closed fields of characteristic zero, as well as over fields of real and rational numbers. Namely, the following theorem holds.

\begin{theorem}[{\cite[Theorems 1.9, 1.10, 1.11]{Yas}}]\label{theo: Egor}
    Let $K$ be an algebraically closed field of characteristic $0$. Then $$J(\Cr_2(K)) = 7200, \quad J(\Cr_2(\mathbb{R})) = J(\Cr_2(\mathbb{Q})) = 120.$$
\end{theorem}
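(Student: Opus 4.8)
The plan is to prove the theorem by reducing, via the $G$-equivariant minimal model program for rational surfaces, to a short list of explicit model spaces, and then estimating the Jordan constant of every finite group acting on each of them. By the classical theory of Manin and Iskovskikh, in the form developed by Dolgachev and Iskovskikh, every finite subgroup $G\subseteq\Cr_2(K)$ is conjugate to a group of regular automorphisms of a smooth projective $G$-surface $X$ that is $G$-minimal, and any such $X$ is of exactly one of two types: either (I) a del Pezzo surface of degree $d\in\{1,\dots,9\}$ with $\rk\operatorname{Pic}(X)^{G}=1$, or (II) a surface carrying a $G$-equivariant conic bundle morphism $X\to\mathbb{P}^{1}$ with $\rk\operatorname{Pic}(X)^{G}=2$. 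Consequently $J(\Cr_2(K))$ equals the supremum of $J(G)$ over the finite groups $G$ occurring in (I) or (II), and it suffices to bound this supremum from above and to exhibit groups attaining $7200$, respectively $120$.

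For the lower bounds I would write down explicit subgroups. When $K$ is algebraically closed, the icosahedral subgroup $A_{5}\subset\PGL_2(K)$ gives $(A_{5}\times A_{5})\rtimes\mathbb{Z}/2\subseteq\bigl(\PGL_2(K)\times\PGL_2(K)\bigr)\rtimes\mathbb{Z}/2=\Aut(\mathbb{P}^{1}\times\mathbb{P}^{1})\subseteq\Cr_2(K)$; this group has order $7200$ and no nontrivial abelian normal subgroup, so $J(\Cr_2(K))\ge 7200$. Over $\mathbb{Q}$, hence over $\mathbb{R}$, the quintic del Pezzo surface is defined over $\mathbb{Q}$, is $S_{5}$-minimal, and carries a faithful $S_{5}$-action defined over $\mathbb{Q}$ (the permutations of a reference quadruple of points in $\mathbb{P}^{2}$ together with the standard quadratic Cremona involution); since the only proper nontrivial normal subgroup of $S_{5}$ is $A_{5}$, which is not abelian, this gives $J(\Cr_2(\mathbb{Q}))\ge J(S_{5})=120$ and likewise $J(\Cr_2(\mathbb{R}))\ge 120$.

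For the upper bound over an algebraically closed $K$ I would examine the model spaces one by one. If $X=\mathbb{P}^{2}$ then $G\subseteq\PGL_3(K)$, and I would invoke the value $J(\PGL_3(K))=360$, which follows from~\cite{Coll} (building on Blichfeldt's classification of finite subgroups of $\GL_3$). If $X=\mathbb{P}^{1}\times\mathbb{P}^{1}$, the intersection of $G$ with $\PGL_2(K)\times\PGL_2(K)$ has order at most $60^{2}$, so $|G|\le 7200$ and a fortiori $J(G)\le 7200$. If $X$ is a del Pezzo surface of degree $d\le 6$, I would use that $\Aut(X)$ acts faithfully on the lattice $K_{X}^{\perp}\subset\operatorname{Pic}(X)$, landing in the Weyl group $W(E_{9-d})$, together with the Dolgachev--Iskovskikh classification of the automorphism groups that actually occur: for $d\le 5$ these are finite of order at most $648$, while for $d=6$ the maximal torus of $\Aut(X)$ meets $G$ in a normal abelian subgroup of index at most $12$; in either case $J(G)$ is far below $7200$. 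Finally, if $X\to\mathbb{P}^{1}$ is a $G$-equivariant conic bundle, the exact sequence $1\to G_{F}\to G\to G_{B}\to 1$ has $G_{B}\subseteq\PGL_2(K)$ while $G_{F}$ acts faithfully on the generic fibre, a conic over $K(\mathbb{P}^{1})$; a finite group acting faithfully on a conic embeds into $\PGL_2$ of the algebraic closure, so both $G_{B}$ and $G_{F}$ have order at most $60$ and $|G|\le 3600$. Taking the maximum over these four cases yields $J(\Cr_2(K))\le 7200$, which together with the lower bound proves the first assertion.

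For $K=\mathbb{R}$ and $K=\mathbb{Q}$ I would run the same four-case scheme, but now over each model one must range over all $K$-forms of $X$ and over all twisted $G$-actions, so Galois cohomology enters. The arithmetic facts that cause the drop to $120$ are: the finite subgroups of $\PGL_2(\mathbb{R})$ are only cyclic and dihedral, the finite subgroups of $\PGL_2(\mathbb{Q})$ are still more restricted, and over $\mathbb{R}$ and $\mathbb{Q}$ the automorphism groups of del Pezzo surfaces shrink accordingly (for instance the torsion automorphisms of the Fermat cubic are not defined over $\mathbb{R}$, and an anisotropic conic, not being $\mathbb{R}$-rational, cannot serve as the base of a conic bundle). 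Carrying these restrictions through each of the four cases gives $J(G)\le 120$, with equality attained already over $\mathbb{Q}$ by the $S_{5}$-action on the quintic del Pezzo surface, which proves the two remaining equalities. I expect the main obstacle to be the sheer volume of this case analysis. Over $\overline{K}$ the delicate parts are the conic bundle case and the borderline case $\mathbb{P}^{1}\times\mathbb{P}^{1}$, where one must determine which subgroups of $\PGL_2$ over a function field occur as $G_{F}$ and verify that the abelian subgroups one produces are normal in all of $G$, not merely in a subgroup. Over $\mathbb{R}$ and $\mathbb{Q}$ the bottleneck is the Galois-descent bookkeeping --- controlling every $K$-form of the model surfaces, its twisted automorphism group, and its interaction with $K$-rationality, which is precisely what rules out phenomena such as a real $(A_{5}\times A_{5})\rtimes\mathbb{Z}/2$ inside $\Cr_2(\mathbb{R})$. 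Each individual verification is elementary; the difficulty lies in organizing them into a complete, non-redundant classification.
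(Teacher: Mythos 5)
This statement is not proved in the paper at all: it is quoted verbatim from Yasinsky's article \cite{Yas}, so the only ``proof'' the paper offers is the citation. Your overall architecture (regularize, run the $G$-equivariant MMP, split into del Pezzo surfaces and conic bundles, bound each case, exhibit $(\mathfrak{A}_5\times\mathfrak{A}_5)\rtimes\mathbb{Z}/2\mathbb{Z}$ on $\mathbb{P}^1\times\mathbb{P}^1$ and $\mathfrak{S}_5$ on the quintic del Pezzo surface for the lower bounds) is indeed the strategy of \cite{Yas} and of the present paper's main theorem, and your lower-bound constructions are correct.

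However, there is a genuine error in your upper bounds for the two hardest cases. You assert that a finite subgroup of $\PGL_2$ (over $K$ or over $\overline{K(\mathbb{P}^1)}$) has order at most $60$, and deduce $|G|\leqslant 7200$ on $\mathbb{P}^1\times\mathbb{P}^1$ and $|G|\leqslant 3600$ for conic bundles. This is false: the finite subgroups of $\PGL_2$ are cyclic, dihedral, $\mathfrak{A}_4$, $\mathfrak{S}_4$, $\mathfrak{A}_5$ (Proposition~\ref{prop:Beauville}), and the cyclic and dihedral families have unbounded order, so neither $|G_F|$, nor $|G_B|$, nor $|G|$ is bounded (already $\mathbb{Z}/n\mathbb{Z}\times\mathbb{Z}/n\mathbb{Z}\subset\Aut(\mathbb{P}^1\times\mathbb{P}^1)$ defeats the claim). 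One cannot bound $J(G)$ by bounding $|G|$ here; one must instead produce a \emph{normal abelian subgroup of bounded index} inside an arbitrarily large $G$. For conic bundles this is exactly what Section~\ref{sect: conic bundles} of the paper does: the eigenvalue lemma (Lemma~\ref{lemma: eigenvalue}, from Serre) shows that a lift $g$ of a generator of the cyclic part of $G_B$ conjugates a generator $h$ of the cyclic part of $G_F$ to $h^{\pm1}$, so $\langle h,g^2\rangle$ is abelian of small index, and Isaacs' theorem (Theorem~\ref{theo: weak jordan constant}) upgrades it to a characteristic, hence normal, abelian subgroup at the cost of squaring the index; the $\mathbb{P}^1\times\mathbb{P}^1$ case needs an analogous argument with the two $\PGL_2$-factors and the swap. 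Without some such mechanism your cases (II) and $\mathbb{P}^1\times\mathbb{P}^1$ are not closed, and the same issue propagates into your treatment of $\mathbb{R}$ and $\mathbb{Q}$, where cyclic and dihedral subgroups of $\PGL_2$ are still unbounded.
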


In dimension 3, the exact values are unknown even in the case of algebraically closed field. But there are upper bounds, see~\mbox{\cite[Theorem 1.2.4]{PrShr2}}. It is also worth noting that the groups of birational automorphisms of projective spaces over algebraically closed fields of positive characteristic are not Jordan, since already automorphism groups are not Jordan (see for example ~\cite[\S 1]{Chen-Shramov}). But for finite field the group of birational automorphisms of the projective plane is Jordan, and even its Jordan constant is computed, see~\cite{PrShr3} and~\cite{Vikulova}.

In this paper we compute the Jordan constants of the birational automorphism group of the projective plane over all fields of characteristic zero. Namely, we prove the following theorem.

\begin{theorem}\label{theo: J(Cr_2)}
    Let $K$ be a field of characteristic $0$. The Jordan constant of the group $\Cr_2(K)$ can attain the following values: $7200$, $168$ or $120$. Moreover,
    \begin{enumerate}
        \item $J(\Cr_2(K)) = 7200$ if and only if $\sqrt{5} \in K$, and $-1$ is a sum of two squares in $K$;
        \item $J(\Cr_2(K)) = 168$ if and only if $\sqrt{5} \not\in K$ and $\sqrt{-7} \in K$;
        \item $J(\Cr_2(K)) = 120$ if and only if one of the two conditions is satisfied:
        \begin{itemize}
            \item $\sqrt{5} \in K$, and $-1$ is not a sum of two squares in $K$; 
            \item $\sqrt{5} \not\in K$, and $\sqrt{-7} \not\in K$.
            \end{itemize}
    \end{enumerate}
\end{theorem}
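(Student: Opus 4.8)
The plan is to prove matching lower and upper bounds for $J(\Cr_2(K))$: for the lower bounds I exhibit explicit finite subgroups, and for the upper bounds I run the equivariant Minimal Model Program and bound $J(G)$ on each possible $G$-Mori fibre space, tracking which field-arithmetic conditions the realisability of each ``large'' group imposes.

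\smallskip
\noindent\emph{Lower bounds.} The del Pezzo surface of degree $5$ carries a faithful $\Sym_5$-action defined over $\mathbb{Q}$, and the split such surface is $K$-rational; since $\mathrm A_5$ is simple and non-abelian, $J(\Sym_5)=120$, hence $J(\Cr_2(K))\ge 120$ for every field $K$ of characteristic $0$. If $\sqrt{-7}\in K$, the Klein representation gives $\mathrm{PSL}_2(\mathbb{F}_7)\hookrightarrow\PGL_3(K)=\Aut(\mathbb{P}^2_K)\subset\Cr_2(K)$, and by simplicity $J(\mathrm{PSL}_2(\mathbb{F}_7))=168$, so $J(\Cr_2(K))\ge 168$. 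If $\sqrt5\in K$ and $-1$ is a sum of two squares in $K$, then the quaternion algebra $\bigl(\tfrac{-1,-1}{K}\bigr)$ is split, so the binary icosahedral group lies in $\GL_2(K)$ and therefore $\mathrm A_5\hookrightarrow\PGL_2(K)$; consequently $(\mathrm A_5\times\mathrm A_5)\rtimes\mathbb{Z}/2\hookrightarrow\Aut(\mathbb{P}^1_K\times\mathbb{P}^1_K)\subset\Cr_2(K)$, and since this group has no non-trivial abelian normal subgroup, its Jordan constant equals $7200$, whence $J(\Cr_2(K))\ge 7200$.

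\smallskip
\noindent\emph{Upper bounds.} A finite $G\subset\Cr_2(K)$ is conjugate to a group of biregular automorphisms of a $K$-rational $G$-Mori fibre space $X$: either a del Pezzo surface with $\rk\operatorname{Pic}(X)^G=1$, or a conic bundle $X\to B$ with $\rk\operatorname{Pic}(X)^G=2$ and $B$ a conic. Since $X$ is $K$-rational it has a $K$-point, so every conic occurring (the base $B$, or $X$ itself when $X$ is a quadric) is $K$-isomorphic to $\mathbb{P}^1_K$, and one finds that $X$ is $\mathbb{P}^2_K$, $\mathbb{P}^1_K\times\mathbb{P}^1_K$, a Weil-restriction form $R_{L/K}\mathbb{P}^1$ attached to a quadratic extension, a del Pezzo surface of degree $\le 6$, or a conic bundle over $\mathbb{P}^1_K$; in each case $G$ also acts on $X_{\bar K}$, so the geometric classifications apply. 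For $X=\mathbb{P}^2_K$, Blichfeldt's classification of finite subgroups of $\PGL_3(\mathbb{C})$ together with Galois descent gives $J(G)\le 120$ unless $G$ contains a Klein-type $\mathrm{PSL}_2(\mathbb{F}_7)$ — forcing $\sqrt{-7}\in K$, with $J(G)=168$ — or the Valentiner group $3.\mathrm A_6$, whose realisability over $K$ forces $\sqrt5,\sqrt{-3}\in K$, hence forces $-1$ to be a sum of two squares, with $J(G)=360<7200$. For $X=\mathbb{P}^1_K\times\mathbb{P}^1_K$ one has $G\subset(\PGL_2\times\PGL_2)(K)\rtimes\mathbb{Z}/2$; every finite subgroup of $\PGL_2(K)$ other than $\mathrm A_5$ has a characteristic abelian subgroup of index $\le 6$, so if neither projection of $G\cap(\PGL_2\times\PGL_2)(K)$ is $\mathrm A_5$ then $J(G)\le 72$, while otherwise $\mathrm A_5\subset\PGL_2(K)$, which forces $\sqrt5\in K$ and $-1$ a sum of two squares, and then $J(G)\le 7200$ with equality only for the extremal group above. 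For $R_{L/K}\mathbb{P}^1$, the del Pezzo surfaces of degree $\le 6$, and the conic bundles over $\mathbb{P}^1_K$, the automorphism groups over $\bar K$ together with descent give $J(G)\le 120$, the sole exception being the del Pezzo surface of degree $2$ whose branch quartic is the Klein quartic, which exists over $K$ only when $\sqrt{-7}\in K$ and then gives $J(G)=168$; and a conic bundle has $\mathrm A_5$ acting on its base only if $\mathrm A_5\subset\PGL_2(K)$, again forcing the condition of~(1). Assembling these, $J(\Cr_2(K))=\sup_G J(G)$ equals $7200$ if $\sqrt5\in K$ and $-1$ is a sum of two squares, equals $168$ if $\sqrt{-7}\in K$ while $\mathrm A_5\not\subset\PGL_2(K)$, and equals $120$ otherwise; since $\mathrm A_5\subset\PGL_2(K)$ iff $\sqrt5\in K$ and $-1\in(K^{*})^2+(K^{*})^2$, and since $\sqrt5,\sqrt{-7}\in K$ together force $-1$ to be a sum of two squares (a local computation of $\bigl(\tfrac{-1,-1}{K}\bigr)$ at the primes above $2$), this is exactly the trichotomy in the statement.

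\smallskip
The main obstacle is the uniform control, over an arbitrary field of characteristic zero, of the automorphism groups and Jordan constants of the del Pezzo surfaces of low degree (especially degrees $1$ and $2$) and of the conic bundles: one must couple the known geometric descriptions of these automorphism groups with Galois-cohomological arguments — forms of del Pezzo surfaces and of conic bundles, Brauer obstructions to the existence of $K$-points and to lifting projective representations, and the determination of the fields of definition of the Valentiner and icosahedral actions — so as to guarantee that no finite group with $120<J(G)<7200$ other than $\mathrm{PSL}_2(\mathbb{F}_7)$ is realisable over a field where the condition of~(1) fails, and that the exceptional realisability conditions all collapse, through splitting of quaternion algebras, to the arithmetic conditions appearing in the theorem.
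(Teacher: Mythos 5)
Your proposal is correct and follows essentially the same route as the paper: a $G$-equivariant MMP reduction to del Pezzo surfaces and conic bundles over $\mathbb{P}^1_K$, with the extremal contributions coming from $\mathbb{P}^1_K\times\mathbb{P}^1_K$ (the group $(\mathfrak{A}_5\times\mathfrak{A}_5)\rtimes\mathbb{Z}/2\mathbb{Z}$), $\mathbb{P}^2_K$ (the Klein and Valentiner groups), and the degree~$5$ del Pezzo surface, and with the trichotomy collapsing because $\omega\in K$ or $\{\sqrt{5},\sqrt{-7}\}\subset K$ forces $-1$ to be a sum of two squares. The paper packages the same ingredients as citations (Hu for $\PGL_3(K)$, Zaitsev for $\mathbb{P}^1_K\times\mathbb{P}^1_K$ and quadrics, and explicit group-theoretic lemmas for the conic-bundle case) and verifies the last arithmetic point by an explicit identity rather than a local computation, but the logical skeleton is identical.
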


We also give examples of fields for each value of the Jordan constant from Theorem~\ref{theo: J(Cr_2)}. Everywhere in this work, $\omega$ will denote a primitive root of unity of degree~3.

\begin{sled}\label{sled: Cr(k), Cr(R), Cr(Q)...} 
All values of the Jordan constant from Theorem~\ref{theo: J(Cr_2)} are attained:
$$J(\Cr_2(\mathbb{C})) = J(\Cr_2(\mathbb{Q}(\sqrt{5}, \sqrt{-7}))) = J(\Cr_2(\mathbb{Q}(\sqrt{5}, \omega))) = 7200,$$ $$ J(\Cr_2(\mathbb{Q}(\sqrt{-7}))) = 168, \quad J(\Cr_2(\mathbb{R})) =  J(\Cr_2(\mathbb{Q}(\sqrt{5})) = J(\Cr_2(\mathbb{Q})) = 120.$$
\end{sled}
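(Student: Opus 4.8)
The plan is to feed each of the seven fields listed into Theorem~\ref{theo: J(Cr_2)}. By that theorem the value of $J(\Cr_2(K))$ is completely determined by the answers to the questions ``is $\sqrt5\in K$?'', ``is $\sqrt{-7}\in K$?'' and ``is $-1$ a sum of two squares in $K$?'', so the corollary amounts to answering these for $K=\CC$, $\QQ(\sqrt5,\sqrt{-7})$, $\QQ(\sqrt5,\omega)$, $\QQ(\sqrt{-7})$, $\RR$, $\QQ(\sqrt5)$ and $\QQ$. (The three equalities $J(\Cr_2(\CC))=7200$ and $J(\Cr_2(\RR))=J(\Cr_2(\QQ))=120$ are in any case already part of Theorem~\ref{theo: Egor}, but I will recover them here as well.)

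For the three fields that should give $7200$ I will check the hypotheses of part~(1): $\sqrt5\in K$ is clear in each case, so only ``$-1$ is a sum of two squares'' needs an argument. Over $\CC$ the element $-1$ is a square, hence a sum of two squares. Over $K=\QQ(\sqrt5,\sqrt{-7})$ I will use the identity
$$-1=\left(\frac{\sqrt5-\sqrt{-7}}{2}\right)^2+\left(\frac{\sqrt5+\sqrt{-7}}{2}\right)^2,$$
which is verified by direct expansion; structurally it reflects that $-2=(\sqrt5)^2+(\sqrt{-7})^2$ is a sum of two squares in $K$, that $2^{-1}=(\tfrac12)^2+(\tfrac12)^2$ is one too, and that the nonzero sums of two squares form a subgroup of $K^{\ast}$. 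Over $K=\QQ(\sqrt5,\omega)=\QQ(\sqrt5,\sqrt{-3})$ I will use that $\omega$ and $\omega^2$ are squares in $\QQ(\omega)$ — indeed $\omega=(\omega^2)^2$ — so that $-1=\omega+\omega^2=(\omega^2)^2+(\omega)^2$ is already a sum of two squares in the subfield $\QQ(\omega)$. In all three cases part~(1) of Theorem~\ref{theo: J(Cr_2)} then gives $J(\Cr_2(K))=7200$.

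For $K=\QQ(\sqrt{-7})$ the only quadratic subfield is $K$ itself, which is imaginary and therefore distinct from the real field $\QQ(\sqrt5)$; hence $\sqrt5\notin K$ while $\sqrt{-7}\in K$, and part~(2) yields $J(\Cr_2(K))=168$. For $K=\QQ(\sqrt5)$ and $K=\RR$ we have $\sqrt5\in K$, but $K$ embeds into $\RR$, so a sum of two squares cannot equal $-1$; the first bullet of part~(3) gives $120$. For $K=\QQ$ neither $\sqrt5$ nor $\sqrt{-7}$ lies in $K$, so the second bullet of part~(3) gives $120$. I do not expect a genuine obstacle here: the only slightly non-immediate point is exhibiting $-1$ as a sum of two squares in $\QQ(\sqrt5,\sqrt{-7})$, and this is handled by the displayed identity (or, if a structural argument is preferred, by the fact that $2$ is inert in $\QQ(\sqrt5)$, so every completion of $\QQ(\sqrt5,\sqrt{-7})$ above $2$ has even degree over $\QQ_2$ and thus splits the quaternion algebra $(-1,-1)$).
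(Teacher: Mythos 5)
Your proposal is correct and follows the same route as the paper: each of the seven fields is fed into Theorem~\ref{theo: J(Cr_2)} after verifying the three arithmetic conditions (whether $\sqrt5$ and $\sqrt{-7}$ lie in $K$, and whether $-1$ is a sum of two squares), with the $\mathbb{C}$, $\mathbb{R}$, $\mathbb{Q}$ cases also available directly from Theorem~\ref{theo: Egor}. The only cosmetic differences are that you use the (correct, and in fact simpler) identity $-1=\bigl(\tfrac{\sqrt5-\sqrt{-7}}{2}\bigr)^2+\bigl(\tfrac{\sqrt5+\sqrt{-7}}{2}\bigr)^2$ in place of the one recorded in Remark~\ref{rem: -1 = a^2 + b^2 in special fields }, and you rule out $\sqrt5\in\mathbb{Q}(\sqrt{-7})$ by a degree/real-versus-imaginary argument where the paper invokes Lemma~\ref{lemma:support}; both verifications are valid.
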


The plan of the paper is as follows. In Section~\ref{sect: preliminaries} we collect some auxiliary statements. In Section~\ref{sect: conic bundles} we estimate the Jordan constants of automorphism groups of surfaces having a conic bundle structure over $\mathbb{P}^1$. In Section~\ref{sect: del Pezzo} we estimate the Jordan constants of automorphism groups of del Pezzo surfaces. Finally, in Section~\ref{sect: Jordan constant of Cremona}, taking into account all previous estimates, we prove Theorem~\ref{theo: J(Cr_2)} and Corollary~\ref{sled: Cr(k), Cr(R), Cr(Q)...}.

We will use the following notation. By $D_{2n}$ we denote the dihedral group of order~${2n}$. By $\overline{K}$ we denote the algebraic closure of the field $K$. If $K \subset L$ is an extension of fields, and $X$ is a variety over $K$, then by $X_L$ we denote the extension of scalars of $X$ to $L$, and we denote the extension of scalars of $X$ to $\overline{K}$ by $\overline{X}$. Zariski tangent space to a variety~$X$ at a point $P$ we denote by $T_PX$. Let $G$ be a group, and $g$, $g' \in G$, then by $\langle g,g' \rangle$ we denote thе subgroup, generated by elements $g$ и~$g'$. Let $\pi: X \xrightarrow{} B$ be a morphism of algebraic varieties; then by $\Aut(X,\pi)$ we denote the group of automorphisms compatible with the morphism $\pi$, that is, the group of such automorphisms $g\in\Aut(X)$ that there exists~$g'\in\Aut(B)$ for which $\pi\circ g = g'\circ\pi$.

I would like to thank my advisor Constantin Shramov for stating the problem, useful discussions
and constant attention to this work. I also want to thank Andrey Trepalin for useful discussions.

\section{Preliminaries}\label{sect: preliminaries}

In this section, we collect some (well known) general statements that will be convenient to refer later.

Firstly, let us recall the standard definition.

\begin{definition}\label{def: char sub}
A subgroup $H$ of a group $G$ is called a \textit{characteristic} subgroup if for every automorphism $\varphi$ of $G$, one has $\varphi(H) = H$.
\end{definition}

The following theorem is useful for estimating Jordan constants of finite groups.

\begin{theorem}[{see~\cite[Theorem 1.41]{Isaacs}}]\label{theo: weak jordan constant} 
Let $G$ be a finite group, and $A$ be its abelian subgroup. Then there is a characteristic abelian subgroup $N$ in $G$ such that $$[G:N]\leqslant [G:A]^2.$$
\end{theorem}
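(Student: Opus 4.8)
The plan is to obtain the required characteristic abelian subgroup as the \emph{Chermak--Delgado subgroup} of $G$. For $H\leqslant G$ set $m_G(H)=|H|\cdot|C_G(H)|$. I would first record the standard lattice properties of this measure (citing them, or reproving them in a few lines): writing $m^\ast=\max_{H\leqslant G}m_G(H)$ and $\mathcal L=\{H\leqslant G : m_G(H)=m^\ast\}$, one has that $\mathcal L$ is closed under $H,K\mapsto\langle H,K\rangle$ and $H,K\mapsto H\cap K$, and that $H\mapsto C_G(H)$ maps $\mathcal L$ into $\mathcal L$ and reverses inclusion. Granting this, $\mathcal L$ has a unique minimal element $M=\bigcap_{H\in\mathcal L}H\in\mathcal L$; since $C_G$ is inclusion-reversing on $\mathcal L$, the subgroup $C_G(M)$ is the \emph{maximal} element of $\mathcal L$, so $M\leqslant C_G(M)$, i.e.\ $M$ is abelian. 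Finally, for $\varphi\in\Aut(G)$ one has $C_G(\varphi(H))=\varphi(C_G(H))$, hence $m_G(\varphi(H))=m_G(H)$, so $\varphi$ permutes $\mathcal L$ and fixes its minimal element: $M$ is characteristic in $G$.

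It then remains only to bound the index of $M$. Since $A$ is abelian, $A\leqslant C_G(A)$, hence
\[
 m_G(A)=|A|\cdot|C_G(A)|\geqslant |A|^2 .
\]
By maximality of the measure on $\mathcal L$ we get $m^\ast=m_G(M)\geqslant m_G(A)\geqslant |A|^2$, while trivially $|C_G(M)|\leqslant |G|$. Therefore
\[
 |M|=\frac{m_G(M)}{|C_G(M)|}\geqslant\frac{|A|^2}{|G|}=\frac{|G|}{[G:A]^2},
\]
which is exactly $[G:M]\leqslant[G:A]^2$. Thus $N:=M$ is a characteristic abelian subgroup with the required index bound.

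The only genuine content is the lattice structure of the Chermak--Delgado measure, so that is the step I expect to be the main (albeit routine) obstacle: the closure of $\mathcal L$ under joins and meets follows from the inequality $m_G(\langle H,K\rangle)\cdot m_G(H\cap K)\geqslant m_G(H)\cdot m_G(K)$, which in turn reduces to the elementary identities $|HK|\cdot|H\cap K|=|H|\cdot|K|$ and $C_G(\langle H,K\rangle)=C_G(H)\cap C_G(K)$ together with the inclusion $C_G(H\cap K)\supseteq C_G(H)C_G(K)$. Once that computational lemma is in place, everything above is formal. Alternatively, as this is a well-known result of Isaacs, one may simply invoke it without reproving the Chermak--Delgado machinery.
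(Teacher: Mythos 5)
Your proof is correct, and it is essentially the argument the paper relies on: the paper gives no proof of its own but cites Isaacs, Theorem 1.41, whose proof is exactly this Chermak--Delgado lattice argument (unique minimal member $M$ of the lattice, abelian since $M\leqslant C_G(M)$, characteristic since automorphisms preserve the measure, with the index bound from $m_G(M)\geqslant m_G(A)\geqslant |A|^2$). Nothing further is needed.
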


The proofs of the following two propositions can be found in~\cite{Zai}.

\begin{propos}[{\cite[Corollary 2.9]{Zai}}]\label{prop: ext of S_n}
Let $n$ be an integer number, $n \geqslant 3$, $n \neq 6$. Let $A$ be a finite abelian group. Then any group $G$ included in the exact sequence $$1\xrightarrow{}\mathfrak{S}_n\xrightarrow{} G \xrightarrow{} A\xrightarrow{} 0,$$ is isomorphic to the direct product of $\mathfrak{S}_n\times A$.
\end{propos}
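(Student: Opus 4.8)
The plan is to exploit the well-known structure of the automorphism group of a symmetric group. Recall that $Z(\mathfrak{S}_n) = 1$ for $n \geqslant 3$, and that for $n \geqslant 3$, $n \neq 6$, every automorphism of $\mathfrak{S}_n$ is inner, so the canonical map $\mathfrak{S}_n \to \operatorname{Inn}(\mathfrak{S}_n) = \Aut(\mathfrak{S}_n)$ is an isomorphism. (The exclusion of $n = 6$ is exactly the point where this fails, since $\operatorname{Out}(\mathfrak{S}_6) \cong \mathbb{Z}/2\mathbb{Z}$.) So the whole proof rests on this single input together with elementary manipulations with normal subgroups.

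Identify $\mathfrak{S}_n$ with its image in $G$, a normal subgroup. Conjugation gives a homomorphism $c \colon G \to \Aut(\mathfrak{S}_n)$, and its restriction to $\mathfrak{S}_n$ is precisely the canonical map onto $\operatorname{Inn}(\mathfrak{S}_n) = \Aut(\mathfrak{S}_n)$, which is an isomorphism by the above. In particular $c$ is surjective, and $\mathfrak{S}_n \cap \ker c = \ker\bigl(c|_{\mathfrak{S}_n}\bigr) = Z(\mathfrak{S}_n) = 1$. Put $K := \ker c = C_G(\mathfrak{S}_n)$, a normal subgroup of $G$. For any $g \in G$ there is $s \in \mathfrak{S}_n$ with $c(g) = c(s)$, hence $s^{-1}g \in K$ and $g \in \mathfrak{S}_n K$; thus $G = \mathfrak{S}_n K$ with $\mathfrak{S}_n \cap K = 1$, and both factors normal, so $G$ is the internal direct product $G = \mathfrak{S}_n \times K$ (the action of $K = C_G(\mathfrak{S}_n)$ on $\mathfrak{S}_n$ being trivial, the semidirect product is direct).

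Finally, from $G = \mathfrak{S}_n \times K$ we obtain $K \cong G/\mathfrak{S}_n \cong A$, whence $G \cong \mathfrak{S}_n \times A$, as claimed. I do not expect a genuine obstacle here: the argument is the standard principle that a group with trivial center and no outer automorphisms splits off as a direct factor of any group in which it sits as a normal subgroup. The only steps requiring a little care are the correct citation of the facts on $\Aut(\mathfrak{S}_n)$ (including why $n=6$ must be excluded) and the bookkeeping that upgrades "$c$ surjective and $\mathfrak{S}_n \cap \ker c = 1$" to an honest internal direct product decomposition; finiteness of $A$ is given and could alternatively be used to run the same argument via orders.
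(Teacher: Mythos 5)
Your proof is correct. The paper does not actually prove Proposition~\ref{prop: ext of S_n} --- it is quoted from \cite[Corollary 2.9]{Zai} --- but your argument is the standard and complete one: $\mathfrak{S}_n$ is a complete group for $n\geqslant 3$, $n\neq 6$ (trivial center, all automorphisms inner), and a complete normal subgroup is always a direct factor, via $G=\mathfrak{S}_n\times C_G(\mathfrak{S}_n)$ with $C_G(\mathfrak{S}_n)\cong G/\mathfrak{S}_n\cong A$. Note that your argument never uses that $A$ is finite or abelian, so you have in fact proved a slightly more general statement than the one asserted.
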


\begin{propos}[{\cite[Proposition 2.11]{Zai}}]\label{prop: A_4 x Zm}
Let $n$ and $m$ be positive integers, $n\geqslant 4$, $n\neq 6$. Let the group $G$ be included in the exact sequence $$1 \xrightarrow {} \mathfrak {A} _n \xrightarrow {} G\xrightarrow {} \mathbb {Z} / m \mathbb{Z} \xrightarrow{} 0.$$ If $m = 2k + 1$, then $G$ is isomorphic to $\mathfrak{A}_n\times\mathbb{Z}/m\mathbb{Z}$. If $m = 2k$, then either $G$ is isomorphic to~$\mathfrak{A}_n\times\mathbb{Z}/m\mathbb{Z}$, or at least contains a normal subgroup isomorphic to $\mathfrak{A}_n\times\mathbb{Z}/k\mathbb{Z}$.
\end{propos}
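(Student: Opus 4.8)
The plan is to push through the two structural features of $\mathfrak{A}_n$ that survive the hypotheses $n\geqslant 4$, $n\neq 6$: the centre $Z(\mathfrak{A}_n)$ is trivial, and $\Aut(\mathfrak{A}_n)$ has order $2\cdot|\mathfrak{A}_n|$. First I would introduce $C=C_G(\mathfrak{A}_n)$, the centralizer in $G$ of the normal subgroup $\mathfrak{A}_n$. Since $Z(\mathfrak{A}_n)=1$ we have $C\cap\mathfrak{A}_n=1$, and as $C$ is normal in $G$ it embeds into $G/\mathfrak{A}_n\cong\mathbb{Z}/m\mathbb{Z}$, so $C$ is cyclic of order dividing $m$. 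Because $C$ and $\mathfrak{A}_n$ are normal in $G$, centralize one another and intersect trivially, the subgroup $\mathfrak{A}_nC$ is normal in $G$ and the multiplication map yields an isomorphism $\mathfrak{A}_nC\cong\mathfrak{A}_n\times C$.

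Next I would consider the conjugation homomorphism $\varphi\colon G\to\Aut(\mathfrak{A}_n)$, whose kernel is exactly $C$, so $G/C$ embeds into $\Aut(\mathfrak{A}_n)$. The restriction of $\varphi$ to $\mathfrak{A}_n$ is injective (again because the centre is trivial), with image $\operatorname{Inn}(\mathfrak{A}_n)\cong\mathfrak{A}_n$. Using that $[\Aut(\mathfrak{A}_n):\operatorname{Inn}(\mathfrak{A}_n)]=2$ for $n\geqslant 4$, $n\neq 6$ --- for $n\geqslant 5$ this is the statement $\Aut(\mathfrak{A}_n)\cong\mathfrak{S}_n$, and for $n=4$ it is the classical isomorphism $\Aut(\mathfrak{A}_4)\cong\mathfrak{S}_4$ --- the chain $\operatorname{Inn}(\mathfrak{A}_n)\subseteq G/C\subseteq\Aut(\mathfrak{A}_n)$ leaves only the possibilities $|G/C|=|\mathfrak{A}_n|$ or $|G/C|=2\,|\mathfrak{A}_n|$.

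In the first case $\mathfrak{A}_nC$ contains $C=\ker\varphi$ and its image under $\varphi$ is $\varphi(\mathfrak{A}_n)=\operatorname{Inn}(\mathfrak{A}_n)$, which coincides with $\varphi(G)$ here; hence $G=\mathfrak{A}_nC\cong\mathfrak{A}_n\times C$, and comparing orders with the exact sequence gives $|C|=m$, so $C\cong\mathbb{Z}/m\mathbb{Z}$ and $G\cong\mathfrak{A}_n\times\mathbb{Z}/m\mathbb{Z}$. In the second case $[G:\mathfrak{A}_nC]=2$, which forces $m=2\,|C|$ to be even and $|C|=k$, so $\mathfrak{A}_nC\cong\mathfrak{A}_n\times\mathbb{Z}/k\mathbb{Z}$ is a normal subgroup of $G$ of the required shape. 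In particular, when $m$ is odd only the first case can occur, and when $m=2k$ we land in one of the two stated alternatives, which completes the argument.

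I do not expect a serious obstacle here; the single external input is the determination of $\operatorname{Out}(\mathfrak{A}_n)$, and this is precisely where $n\neq 6$ enters, since $\operatorname{Out}(\mathfrak{A}_6)$ has order $4$ and the argument would then permit a third type of quotient $G/C$. The value $n=4$ deserves a word of caution because $\mathfrak{A}_4$ is not simple, but the proof never uses simplicity --- only the triviality of the centre and the order of $\Aut(\mathfrak{A}_4)$ --- so it goes through unchanged.
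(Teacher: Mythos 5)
Your argument is correct and complete: the centralizer $C=C_G(\mathfrak{A}_n)$ is normal, intersects $\mathfrak{A}_n$ trivially, embeds into the cyclic quotient, and the index-two containment $\operatorname{Inn}(\mathfrak{A}_n)\subseteq\Aut(\mathfrak{A}_n)$ (valid for $n\geqslant 4$, $n\neq 6$) forces exactly the two alternatives in the statement. The paper itself does not reproduce a proof --- it cites the result from \cite[Proposition 2.11]{Zai} --- but your route via the conjugation map $G\to\Aut(\mathfrak{A}_n)$ and the triviality of $Z(\mathfrak{A}_n)$ is the standard one for such extensions of complete-up-to-index-two groups, and every step (normality of $\mathfrak{A}_nC$, the order count giving $|C|=m$ or $|C|=k$, and the exclusion of the second case for odd $m$) checks out.
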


In this paper, we will need some information about finite subgroups in the group~$\PGL_2(K)$, where $K$ is an arbitrary field of characteristic zero. For our purposes, the following proposition will be convenient.

\begin{propos}[{cf.~\cite[Proposition 1.1]{Bea}}]\label{prop:Beauville}
Let $K$ be a field of characteristic $0$. Then the group $\PGL_2(K)$ can contain only finite subgroups isomorphic to~$\mathbb{Z}/n\mathbb{Z}$, $D_{2n}\; 
(\text{for}\;n\geqslant 2)$, $\mathfrak{A}_4$, $\mathfrak{S}_4$ and $\mathfrak{A}_5$. Moreover, $\PGL_2(K)$ contains $\mathfrak{A}_5$ if and only if $-1$ is a sum of two squares in $K$, and $K$ contains $\sqrt{5}$.
\end{propos}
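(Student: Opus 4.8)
The plan is to reduce to the classical classification over $\overline{K}$ and then descend. First I would note that any finite subgroup $G \subset \PGL_2(K) \subset \PGL_2(\overline{K})$ is, by the classical classification of finite subgroups of $\PGL_2$ over an algebraically closed field of characteristic zero (as in \cite[Proposition 1.1]{Bea} or the standard references), isomorphic to one of $\mathbb{Z}/n\mathbb{Z}$, $D_{2n}$, $\mathfrak{A}_4$, $\mathfrak{S}_4$ or $\mathfrak{A}_5$. This immediately gives the first assertion, so the content is the "moreover" part: characterizing when $\mathfrak{A}_5$ actually embeds into $\PGL_2(K)$ for a non-closed $K$.

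For the "if" direction, I would write down an explicit icosahedral subgroup. Recall that $\mathfrak{A}_5 \cong \PSL_2(\mathbb{F}_5)$, and $\mathfrak{A}_5$ acts on $\mathbb{P}^1$ via the two-dimensional representations of the binary icosahedral group $2.\mathfrak{A}_5 \subset \SL_2$. The matrix entries of a generating set (say the standard rotations by $2\pi/5$ and $2\pi/3$) can be chosen in $\mathbb{Q}(\zeta_5)$, and after clearing denominators one sees that a subfield generated by $\sqrt{5}$ and $i$ suffices; more precisely, the obstruction to realizing the relevant projective representation over $K$ is exactly that the associated quaternion algebra splits, which translates into $-1$ being a sum of two squares (equivalently $\langle 1,1,1\rangle$ is isotropic), together with the need for $\sqrt{5}$ to have the eigenvalues of order-$5$ elements available in $\PGL_2(K)$. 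I would assemble the generators explicitly and check the icosahedral relations hold, confirming $\mathfrak{A}_5 \hookrightarrow \PGL_2(K)$ under the stated hypotheses.

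For the "only if" direction, suppose $\mathfrak{A}_5 \subset \PGL_2(K)$. An element of order $5$ lifts to a diagonalizable element of $\GL_2(\overline{K})$ with eigenvalue ratio a primitive $5$th root of unity; since conjugacy classes and traces are defined over $K$, the quantity $\zeta_5 + \zeta_5^{-1} = \tfrac{-1+\sqrt5}{2}$ must lie in $K$, forcing $\sqrt5 \in K$. For the sum-of-two-squares condition I would use that $\mathfrak{A}_5$ contains $\mathfrak{A}_4$, hence a Klein four-group $V_4 \subset \PGL_2(K)$; lifting $V_4$ to $\GL_2$ produces a $2$-cocycle whose class in $\mathrm{Br}(K)[2]$ is the obstruction to a genuine linear lift, and a non-split quaternion algebra there obstructs the embedding — tracing through, the embedding of $V_4$ (equivalently of $\mathfrak{A}_4$ or $\mathfrak{S}_4$) into $\PGL_2(K)$ is possible precisely when the conic $x^2+y^2+z^2=0$ has a $K$-point, i.e. $-1$ is a sum of two squares in $K$.

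The main obstacle I anticipate is the Brauer-theoretic bookkeeping in the $V_4$ lifting argument: one must correctly identify the obstruction class of the projective representation, show it is represented by the quaternion algebra $(-1,-1)_K$, and connect "this algebra splits" with "$-1$ is a sum of two squares in $K$." The order-$5$ trace argument and the explicit construction are comparatively routine; pinning down the cohomological obstruction cleanly, and making sure the $\sqrt5$ and sum-of-squares conditions are simultaneously necessary and simultaneously sufficient (rather than merely each necessary), is where care is needed. If a self-contained argument is preferred over citing \cite{Bea}, I would instead give the explicit generators of the icosahedral group directly and verify by hand that the field they generate is contained in any $K$ with $\sqrt5 \in K$ and $-1$ a sum of two squares, and conversely extract both conditions from such generators.
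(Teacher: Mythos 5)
The paper offers no proof of this proposition: it is imported wholesale from Beauville's classification (\cite[Proposition 1.1]{Bea}), so there is no internal argument to compare yours against. Judged on its own, your skeleton is reasonable --- reduce the list of isomorphism types to the classification over $\overline{K}$, extract $\sqrt{5}$ from an element of order $5$, and locate the sum-of-two-squares condition in a Brauer-type obstruction. The $\sqrt 5$ step is essentially correct once you replace the loose phrase ``traces are defined over $K$'' (a lift $\tilde g\in\GL_2(K)$ has trace defined only up to scalar) by the well-defined invariant $\operatorname{tr}(\tilde g)^2/\det(\tilde g)=2+\zeta_5+\zeta_5^{-1}$, which must lie in $K$ and forces $\sqrt5\in K$.

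The genuine gap is in the sum-of-two-squares step. Your claim that a Klein four-group embeds in $\PGL_2(K)$ precisely when $-1$ is a sum of two squares is false: the classes of $\operatorname{diag}(1,-1)$ and $\left(\begin{smallmatrix}0&1\\1&0\end{smallmatrix}\right)$ generate a copy of $(\mathbb{Z}/2\mathbb{Z})^2$ in $\PGL_2(K)$ over \emph{every} field --- in particular over $\mathbb{R}$ and $\mathbb{Q}$, where $-1$ is not a sum of two squares --- so no obstruction can be read off from $V_4$ alone, and the parenthetical ``equivalently of $\mathfrak{A}_4$'' is not an equivalence. The order-$3$ element of $\mathfrak{A}_4\subset\mathfrak{A}_5$ is indispensable. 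A correct route: identify $\PGL_2(K)$ with $\SO(q)(K)$ for $q=-\det$ on trace-zero $2\times2$ matrices, so $q\simeq\langle1,1,-1\rangle$ is isotropic; the three involutions of $V_4$ lift to pairwise anticommuting trace-zero matrices spanning three orthogonal lines, and the $3$-cycle permutes these lines isometrically, forcing $q\simeq\langle d,d,d\rangle$; hence $\langle1,1,1\rangle$ is isotropic and $-1$ is a sum of two squares. The same isomorphism of quadratic forms gives the converse cleanly, by importing the icosahedral rotation group $\mathfrak{A}_5\subset\SO_3(\mathbb{Q}(\sqrt5))$ (whose matrices have entries in the golden-ratio field); this also repairs your ``if'' direction, since explicit generators over $\mathbb{Q}(\sqrt5,i)$ would not cover fields such as $\mathbb{Q}(\sqrt5,\omega)$, which satisfy the hypotheses without containing $i$ (cf.\ Remark~\ref{rem: -1 = a^2 + b^2 in special fields }).
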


Jordan constants of the group $\GL_2(K)$ are computed over all fields of characteristic zero in paper~\cite{Hu}, but for our purposes we limit ourselves to the following estimate.

\begin{lemma}\label{lemma:J(Gl_2 <= 60)}
     Let $K$ be a field of characteristic $0$. Then $$J(\GL_2(K)) \leqslant 60.$$
\end{lemma}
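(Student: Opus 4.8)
The idea is to reduce the bound for $\GL_2(K)$ to a bound involving $\PGL_2(K)$ together with the kernel of the projection. Let $G \subseteq \GL_2(K)$ be an arbitrary finite subgroup. Consider the natural surjection $\pi\colon \GL_2(K) \to \PGL_2(K)$, and let $\overline{G} = \pi(G)$, $Z = G \cap K^\ast I$ the (finite, hence cyclic) central subgroup of scalar matrices in $G$. Then there is an exact sequence $1 \to Z \to G \to \overline{G} \to 1$ with $Z$ central in $G$, so $G$ is a central extension of $\overline{G}$ by a cyclic group.

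First I would invoke Proposition \ref{prop:Beauville} to enumerate the possibilities for $\overline{G}$: it is one of $\mathbb{Z}/n\mathbb{Z}$, $D_{2n}$, $\mathfrak{A}_4$, $\mathfrak{S}_4$ or $\mathfrak{A}_5$. In the abelian cases $\overline{G} \cong \mathbb{Z}/n\mathbb{Z}$, the group $G$ is a central (hence abelian, being generated by $Z$ and a single lift) extension of a cyclic group, so $G$ itself is abelian and $J(G) = 1$. When $\overline{G} \cong D_{2n}$, the subgroup $\pi^{-1}(\mathbb{Z}/n\mathbb{Z}) \cap G$ is abelian of index $2$ in $G$, so $J(G) \le 2$ by Theorem \ref{theo: weak jordan constant} (in fact directly $J(G)\le 2$). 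So the only cases that can produce a large Jordan constant are $\overline{G} \in \{\mathfrak{A}_4, \mathfrak{S}_4, \mathfrak{A}_5\}$.

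For these three cases I would argue as follows. Pick an abelian subgroup $\overline{A} \subseteq \overline{G}$ of smallest index: $[\mathfrak{A}_4 : \overline{A}] = 3$ (take the Klein four-group $V_4$), $[\mathfrak{S}_4 : \overline{A}] = 6$ (take $V_4$ again, since the largest abelian subgroups of $\mathfrak{S}_4$ have order $4$), and $[\mathfrak{A}_5 : \overline{A}] = 12$ (the largest abelian subgroups of $\mathfrak{A}_5$ have order $5$; actually order $6 \nmid$, so order $5$, giving index $12$). Let $A = \pi^{-1}(\overline{A}) \cap G$; this is a central extension of an abelian group by the cyclic group $Z$, hence abelian, and $[G : A] = [\overline{G} : \overline{A}]$. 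Thus $[G:A] \le 12$, so by Theorem \ref{theo: weak jordan constant} there is a characteristic abelian subgroup $N \subseteq G$ with $[G : N] \le [G:A]^2 \le 144$. This gives $J(\GL_2(K)) \le 144$, which is not yet good enough, so the bound must be sharpened.

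The main obstacle, then, is improving $144$ to $60$. Here one should not apply Theorem \ref{theo: weak jordan constant} as a black box but instead analyze the central extensions directly. Since $Z$ is central and cyclic, $G$ is determined by a class in $H^2(\overline{G}, Z)$; lifting the abelian subgroup $\overline{A}$ of $\overline{G}$ (which is itself a small abelian group — $V_4$ or $\mathbb{Z}/5\mathbb{Z}$) gives an abelian $A \le G$ with $[G:A] = |\overline{G}|/|\overline{A}|$, namely $3$, $6$, or $12$. But one can do better: for $\mathfrak{A}_5$, a Sylow-style or Schur-multiplier argument (the relevant double cover is $\mathrm{SL}_2(\mathbb{F}_5) = 2.\mathfrak{A}_5$, which has a cyclic subgroup of order $10$, giving index $6$), yields an abelian subgroup of $G$ of index at most $\le 10$; squaring still overshoots, so instead one observes that $G$ has a normal abelian subgroup (not merely an abelian subgroup) of controlled index — e.g. the preimage of a normal abelian subgroup of $\overline{G}$ when one exists, or one exploits that the relevant minimal-index normal abelian subgroup of $\mathfrak{A}_5$-type central extensions is bounded by $60 = |\mathfrak{A}_5|$ directly since the trivial subgroup already has index $|G|$ and a more careful count using the classification of finite subgroups of $\GL_2(\mathbb{C})$ (binary polyhedral groups) shows $J \le 60$. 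Concretely, I would finish by citing or reproving that every finite subgroup of $\GL_2(\overline{K})$ has a normal abelian subgroup of index dividing $60$ (this follows from the classification: the binary icosahedral group $2.\mathfrak{A}_5$ has order $120$ with center of order $2$, and $J(2.\mathfrak{A}_5) = 60$), and noting $\GL_2(K) \subseteq \GL_2(\overline{K})$. That classification step — translating Proposition \ref{prop:Beauville} into a statement about the binary covers and checking $J(2.\mathfrak{A}_5) = 60$ — is the real content.
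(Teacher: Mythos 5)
Your setup and your treatment of the cyclic and dihedral cases coincide exactly with the paper's proof: write $1 \to Z \to G \to \overline{G} \to 1$ with $Z$ cyclic and central, invoke Proposition~\ref{prop:Beauville}, observe that $G$ is abelian when $\overline{G}$ is cyclic, and pull back the index-$2$ cyclic subgroup when $\overline{G}$ is dihedral. Where you diverge is in the polyhedral cases, and there you miss the one observation that makes the whole lemma trivial: the central kernel $Z$ is \emph{itself} a normal abelian subgroup of $G$, of index exactly $|\overline{G}| \leqslant 60$. That single sentence is the paper's entire argument for $\overline{G} \in \{\mathfrak{A}_4, \mathfrak{S}_4, \mathfrak{A}_5\}$. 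Instead, you first pull back a maximal abelian subgroup of $\overline{G}$ and apply Theorem~\ref{theo: weak jordan constant}, obtaining $144$ — a dead end you correctly abandon — and then resort to the classification of finite subgroups of $\GL_2(\overline{K})$ via binary polyhedral groups and the computation $J(2.\mathfrak{A}_5) = 60$. That route does reach the right conclusion (so there is no genuine gap), but it imports machinery the lemma does not need: no Schur multipliers, no classification of binary covers, and no passage to $\overline{K}$ are required once you notice that the trivial bound $[G:Z] = |\overline{G}|$ already does the job. Two small inaccuracies in your write-up: the phrase ``normal abelian subgroup of index \emph{dividing} $60$'' is wrong ($24 \nmid 60$; you mean ``of index at most $60$''), and the aside about $\mathrm{SL}_2(\mathbb{F}_5)$ having a cyclic subgroup of order $10$ of index $6$ is irrelevant to a bound on the index of a \emph{normal} abelian subgroup, since that subgroup is not normal in $2.\mathfrak{A}_5$.
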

\begin{proof}
It is easy to see that any finite subgroup in $\GL_2(K)$ is a central extension of a finite subgroup in $\PGL_2(K)$ by a finite cyclic group. Let $G\subset\GL_2(K)$ be a finite subgroup, then it is included into the exact sequence $$0\xrightarrow{}\mathbb{Z}/r\mathbb{Z}\xrightarrow{} G\xrightarrow{} G'\xrightarrow{} 1,$$ where $G'$ is a finite subgroup in $\PGL_2(K)$. By Proposition~\ref{prop:Beauville} the group $G'$ is either cyclic, or dihedral, or isomorphic to $\mathfrak{A}_4$, $\mathfrak{S}_4$, or $\mathfrak{A}_5$.

If $G'$ is isomorphic to $\mathfrak{A}_4$, $\mathfrak{S}_4$ or $\mathfrak{A}_5$, then the subgroup $\mathbb{Z}/r\mathbb{Z}\subset G$ is a normal abelian subgroup of index at most $60$, hence $J(G) \leqslant 60$.

If $G'$ is cyclic, then the group $G$ is the central extension of a finite cyclic group by a finite cyclic group, hence $G$ is abelian, and $J(G) = 1$.

If $G'$ is dihedral, then we can take the normal cyclic subgroup $\mathbb{Z}/n\mathbb{Z}\subset G'$ of index~$2$. The preimage of $\mathbb{Z}/n\mathbb{Z}$ is a normal abelian subgroup in $G$ of index~$2$, hence~$J(G)\leqslant~2$.

Thus, all cases are considered, and the lemma is proved. \end{proof}

The following theorem about the action of a finite group on a tangent space of a variety at fixed point is well known. However, in different sources it is formulated in different generality, so we also give a proof of it.

\begin{theorem}\label{theo: proof of action with fixed point}
Let $X$ be an irreducible algebraic variety over a field $K$ of characteristic~$0$. Let $G$ be a finite group acting faithfully on $X$ with a fixed~$K$-point $P$. Then the natural homomorphism $$d:G\xrightarrow{}\GL(T_PX)$$ is injective.
\end{theorem}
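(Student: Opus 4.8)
The plan is to reduce to the case of a Noetherian local ring by passing to the local ring $\mathcal O_{X,P}$ with maximal ideal $\mathfrak m$, on which $G$ acts faithfully — faithfulness of the action on the local ring follows from irreducibility of $X$, since an automorphism fixing an open neighbourhood of $P$ fixes all of $X$. The Zariski tangent space is $T_PX = (\mathfrak m/\mathfrak m^2)^\vee$, so it suffices to show that the induced $G$-action on $\mathfrak m/\mathfrak m^2$ is faithful; equivalently, that an element $g \in G$ acting trivially on $\mathfrak m/\mathfrak m^2$ is the identity.

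First I would fix such a $g$, say of order $n$, and consider its action on $\mathcal O_{X,P}$. Since $g$ acts trivially on $\mathfrak m/\mathfrak m^2$, for every $f \in \mathfrak m$ we have $g(f) - f \in \mathfrak m^2$. The key step is then an induction showing $g(f) - f \in \mathfrak m^k$ for all $k$: if $g(f) \equiv f \pmod{\mathfrak m^k}$ for all $f \in \mathfrak m$, write $g(f) = f + h$ with $h \in \mathfrak m^k$; applying this to a product of generators of $\mathfrak m$ and using that $g$ is a ring homomorphism, one upgrades the congruence to $\mathfrak m^{k+1}$. Hence $g(f) - f \in \bigcap_k \mathfrak m^k$, which is zero by the Krull intersection theorem (here characteristic zero is not needed, only that $\mathcal O_{X,P}$ is Noetherian local). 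Therefore $g$ acts trivially on $\mathcal O_{X,P}$, and by the faithfulness reduction above, $g$ is the identity.

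The point where characteristic zero genuinely enters — and which I regard as the main subtlety rather than obstacle — is that the naive inductive argument needs care when $g$ has finite order: one should instead linearize. Concretely, since $G$ is finite and $\operatorname{char} K = 0$, the $G$-action on $\mathcal O_{X,P}$ can be averaged, and one can choose a $g$-equivariant splitting realizing $\mathfrak m/\mathfrak m^2$ inside $\mathfrak m$, i.e. lift a basis of $\mathfrak m/\mathfrak m^2$ to elements on which $g$ acts trivially (using that the finite-order automorphism of the finite-dimensional space $\mathfrak m/\mathfrak m^{N}$ is semisimple for each $N$, then passing to the limit). Once $g$ fixes a generating set of $\mathfrak m$ as a topological algebra, it fixes everything in $\widehat{\mathcal O}_{X,P}$ and hence in $\mathcal O_{X,P}$. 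I would present the completion-and-semisimplicity version as the clean argument: work in $\widehat{\mathcal O}_{X,P}$, note $g$ acts on each $\mathfrak m^k/\mathfrak m^{k+1}$ compatibly, use semisimplicity of the finite-order operator on $\mathfrak m/\mathfrak m^2$ together with characteristic zero to build a $g$-fixed regular system of parameters step by step, and conclude $g = \operatorname{id}$ on the completion, hence on $\mathcal O_{X,P}$, hence on $X$ by irreducibility. This shows $\ker(d)$ is trivial, which is the claim.
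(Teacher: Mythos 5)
Your final argument is correct and essentially the paper's: both reduce to the local ring, use characteristic-zero complete reducibility (Maschke for the kernel in the paper, semisimplicity of the finite-order operator on $\mathfrak m/\mathfrak m^N$ in your version) to upgrade triviality on $\mathfrak m/\mathfrak m^2$ to triviality on every $\mathfrak m/\mathfrak m^N$, and then conclude on $\widehat{\mathcal O}_{X,P}$, on $\mathcal O_{X,P}$, and on $X$ by irreducibility. You were right to flag and discard the naive Krull-intersection induction as the main subtlety --- it genuinely fails (triviality of $g$ mod $\mathfrak m^k$ does not self-improve to mod $\mathfrak m^{k+1}$, and the statement itself is false in positive characteristic), and the paper's way of doing the linearization is simply to apply Maschke to the whole filtration $\mathfrak m \supset \mathfrak m^2 \supset \cdots$ and split it equivariantly.
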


\begin{proof}
Suppose $d$ is not injective. Then, replacing $G$ by the kernel of the homomorphism~$d$, we can assume that $G$ acts trivially on the tangent space. Hence $G$ acts trivially on the cotangent space~$\mathfrak{m}_P/\mathfrak{m}^2_P$, where~$\mathfrak{m}_P\subset\mathcal{O}_P$ is the maximal ideal of the local ring of the point $P$. The morphism $$\mathcal{O}_P\xrightarrow{}\mathcal{O}_P/\mathfrak{m}_P\simeq K$$ has the natural section, hence we have $G$-invariant decomposition~$\mathcal{O}_P\simeq\mathfrak{m}_P \oplus K$ into a direct sum of vector subspaces.

Consider a $G$-invariant filtration $$\mathfrak{m}_P\supset\mathfrak{m}^2_P\supset\mathfrak{m}^3_P\supset\dots$$According to Nakayama's lemma, the ideal $\mathfrak{m}_P$ is generated by elements whose images form a basis of the space $\mathfrak{m}_P/\mathfrak{m}^2_P$. Hence, $G$ acts trivially on vector spaces~$\mathfrak{m}^n_P/\mathfrak{m}^{n+1}_P$ for all integers $n\geqslant 1$. According to Maschke's theorem, any representation of a finite group $G$ over a field of characteristic zero is completely reducible. Hence we have an isomorphism of representations of the group $G$: $$\mathfrak{m}_P\simeq\bigoplus\limits_{n=1}^\infty\mathfrak{m}^n_P/\mathfrak{m}^{n+1}_P.$$ Therefore, $G$ acts trivially on $\mathfrak{m}_P$ and on $\mathcal{O}_P$.

Let $U\subset X$ be an affine neighborhood of the point $P$. Take an arbitrary element $g\in G$, then $U' = g(U)$ is also an affine neighborhood of the point $P$. Let $R$ and $R'$ be the coordinate rings of neighborhoods $U$ and $U'$, respectively. Then $R$ and $R'$ are subrings in the local ring $\mathcal{O}_P$, such that $g^*R'= R$. Since $g^*$ is trivial automorphism of the ring $\mathcal{O}_P$, then $R = R'$ and $g$ trivially acts on $R$. Therefore $U = U'$ and $g$ trivially acts on $U$. Using the irreducibility of $X$, we conclude that $g$ acts on $X$ by a trivial automorphism. So $g$ is a neutral element, and the group $G$ is trivial.
\end{proof}

The following corollary will be useful for estimating Jordan constants of groups acting on surfaces with fixed points.

\begin{sled}\label{lemma: action with fixed point}
Let $S$ be a smooth surface over a field $K$. Let $\Gamma$ be a group acting faithfully on $S$ with a fixed $\overline{K}$-point $P$. Then $J(\Gamma) \leqslant 60$.
\end{sled}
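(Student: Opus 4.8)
The plan is to reduce everything to finite subgroups and then combine Theorem~\ref{theo: proof of action with fixed point} with Lemma~\ref{lemma:J(Gl_2 <= 60)}. By Definition~\ref{def: J.} it suffices to show $J(G)\leqslant 60$ for every finite subgroup $G\subseteq\Gamma$, so I would fix such a $G$; it acts faithfully on $S$ and fixes the $\overline K$-point $P$.

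Next I would base change to the algebraic closure. Base change along $K\hookrightarrow\overline K$ induces a homomorphism $\Aut(S)\to\Aut(\overline S)$, which is injective because $K\to\overline K$ is faithfully flat; hence $G$ acts faithfully on the smooth surface $\overline S=S_{\overline K}$, still fixing the $\overline K$-point $P$. Then I would pass to the irreducible component $Z$ of $\overline S$ through $P$: since $G$ fixes $P$ it preserves $Z$, and $G$ acts faithfully on $Z$ because $\Gal(\overline K/K)$ permutes transitively the components of $\overline S$ lying over a given component of $S$ and commutes with the $G$-action, so an element of $G$ acting trivially on $Z$ acts trivially on every component, hence on all of $\overline S$. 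Now $Z$ is an irreducible smooth surface over $\overline K$ on which $G$ acts faithfully with fixed point $P$, so Theorem~\ref{theo: proof of action with fixed point} gives an injective homomorphism $d\colon G\to\GL(T_PZ)$; since $Z$ is a smooth surface, $T_PZ$ is a two-dimensional $\overline K$-vector space, and a choice of basis identifies $G$ with a finite subgroup of $\GL_2(\overline K)$. By Lemma~\ref{lemma:J(Gl_2 <= 60)} we get $J(G)\leqslant J(\GL_2(\overline K))\leqslant 60$, and taking the supremum over all finite $G\subseteq\Gamma$ yields $J(\Gamma)\leqslant 60$.

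The only point requiring some care is the reduction to an irreducible surface on which $G$ still acts faithfully; everything else is a direct assembly of the two quoted results. If $S$ is assumed connected and geometrically irreducible — which is the situation in all later applications, where $S$ is a del Pezzo surface or a conic bundle — this step is unnecessary, and the argument reduces to: base change to $\overline K$, apply Theorem~\ref{theo: proof of action with fixed point} to embed $G$ into $\GL_2(\overline K)$, and invoke Lemma~\ref{lemma:J(Gl_2 <= 60)}.
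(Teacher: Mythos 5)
Your proof is correct and follows essentially the same route as the paper: restrict to a finite subgroup $G$, use Theorem~\ref{theo: proof of action with fixed point} to embed $G$ into $\GL(T_PS)\simeq\GL_2\bigl(\overline{K}\bigr)$, and conclude with Lemma~\ref{lemma:J(Gl_2 <= 60)}. The paper's version is terser and silently passes over the base change to $\overline{K}$ and the irreducibility hypothesis of the theorem, which you handle explicitly; this is a welcome refinement rather than a different argument.
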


\begin{proof}
Let $G\subset\Gamma$ be a finite subgroup. Then the action of~$G$ on $S$ induces the action of~$G$ on the tangent space $T_{P}S$. By Theorem~\ref{theo: proof of action with fixed point} this action is faithful, that is, the group~$G$ embeds in the group~$\GL(T_PS)\simeq\GL_2\left(\overline{K}\right)$. Using Lemma~\ref{lemma:J(Gl_2 <= 60)}, we get inequality $$J(G) \leqslant J\left(\GL_2\left(\overline{K}\right)\right) \leqslant 60.$$ Hence, $J(\Gamma) \leqslant 60$.
\end{proof}

We will also need the following remark.

\begin{rem}\label{rem: -1 = a^2 + b^2 in special fields }
In the field $\mathbb{Q}(\sqrt{5}, \sqrt{-7})$ and in the field $\mathbb{Q}(\omega)$ there are decompositions of~$-1$ into the sums of two squares, for example: $$\Bigl(\frac{\sqrt{-7}+\sqrt{5}}{-1+\sqrt{-35}}\Bigl)^2 + \Bigl(\frac{6}{-1 + \sqrt{-35}}\Bigl)^2 = -1, \quad \omega^2 + (\omega^2)^2 = -1.$$
\end{rem}

\section{Automorphisms of conic bundles}\label{sect: conic bundles}
In this section we evaluate Jordan constants of automorphism groups of surfaces having a conic bundle structure over $\mathbb{P}^1_K$. At first, let us recall the definition. Let $X$ be a smooth surface and let $\pi:X \xrightarrow{}B$ be a dominant morphism with connected fibers to a smooth curve. Then $\pi$ is called a conic bundle if each (scheme) fiber of $\pi$ is isomorphic to a conic.

Let $X$ be a smooth surface over the field $K$, and let $\pi: X\xrightarrow{}{}\mathbb{P}^1_K$ be a conic bundle. Let $G\subset\Aut(X,\pi)$ be a finite subgroup, then it is included in a short exact sequence $$1\xrightarrow{}{} G_F\xrightarrow{}{} G\xrightarrow{p}{} G_B\xrightarrow{}{} 1.$$ Here $G_B$ is a finite subgroup in automorphism group of base, that is $G_B \subset \PGL_2(K)$. A subgroup $G_F$ acts on $X$ preserving fibers. Since $G_F$ is finite, there exist a rational fiber with faithful action of group $G_F$, that is $G_F \subset \PGL_2(K)$. We denote the homomorphism from~$G$ to $G_B$ by $p$.

Now we evaluate the Jordan constant of the group $G$ for different $G_F$ and $G_B$. We use approximately the same methods as in~\hbox{\cite[Proposition 3.2]{Yas}}, however, we need more precise estimates.

Firstly, we need the following auxiliary geometric lemma.

\begin{lemma}[{\cite[Lemma 5.2]{Serre 2}}]\label{lemma: eigenvalue}
Let $g\in G$, and $h\in G_F$ be such that $g$ normalizes the cyclic subgroup $\langle h\rangle$. Then $ghg^{-1}$ equals either $h$ or $h^{-1}$.
\end{lemma}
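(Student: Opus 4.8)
The plan is to exploit the conic bundle structure, in particular the fact that fibers of $\pi$ are one-dimensional, so that the action of $\langle h\rangle$ on a generic fiber near a point fixed by $h$ is linearizable. First I would recall that $G_F$ acts on the generic (or a sufficiently general) rational fiber $F\cong\mathbb P^1$ of $\pi$, so we may regard $h$ as an element of $\PGL_2$ acting on $F$. Since $h$ has finite order and $\operatorname{char} K = 0$, after passing to $\overline K$ we can diagonalize a lift of $h$ to $\GL_2$, so that $h$ acts on $F\cong\mathbb P^1_{\overline K}$ with exactly two fixed points $P_0, P_\infty$ (the case $h = 1$ being trivial). The key observation is that $g$ preserves the set $\{P_0, P_\infty\}$: indeed, since $g$ normalizes $\langle h\rangle$, it sends the fixed-point set of $h$ to the fixed-point set of $ghg^{-1}$, and $ghg^{-1}$ generates the same cyclic group $\langle h\rangle$, hence has the same two fixed points on $F$ (a nontrivial element of $\PGL_2$ and its powers that generate the same cyclic group share their fixed-point set). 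Here one has to be slightly careful: $g$ need not preserve the fiber $F$, but it maps $F$ to another fiber $g(F)$, on which $ghg^{-1}$ acts, and one identifies fixed points accordingly; alternatively work with the $G$-equivariant family of fixed points over the base.

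Next I would analyze the two possibilities for how $g$ permutes $P_0$ and $P_\infty$. In the eigenbasis, $h = \operatorname{diag}(1,\zeta)$ up to scalar for some root of unity $\zeta$. If $g$ fixes both $P_0$ and $P_\infty$, then on the tangent space (or cotangent space) at, say, $P_0$ the commuting elements $g$ and $h$ act diagonally, and the relation $ghg^{-1} = h^k$ forces the eigenvalue $\zeta$ of $h$ to equal $\zeta^k$; since $h$ is determined by its action near $P_0$ (an element of $\PGL_2$ fixing two points is determined by its derivative at one of them), this gives $h^k = h$, i.e. conjugation by $g$ is trivial on $\langle h\rangle$ and $ghg^{-1} = h$. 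If instead $g$ swaps $P_0$ and $P_\infty$, then conjugation by $g$ sends the local multiplier $\zeta$ at $P_0$ to its reciprocal $\zeta^{-1}$ at the other point, so $ghg^{-1} = h^{-1}$. In either case $ghg^{-1}\in\{h, h^{-1}\}$, which is the claim.

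I would present this cleanly by reducing to the following elementary statement about $\PGL_2$: if $h\in\PGL_2(\overline K)$ has finite order $> 1$ and $g\in\PGL_2(\overline K)$ satisfies $ghg^{-1}\in\langle h\rangle$, then in fact $ghg^{-1}\in\{h,h^{-1}\}$ — this is because $g$ permutes the two fixed points of $h$, and the automorphisms of $\mathbb P^1$ fixing those two points form a torus on which the only action compatible with normalizing $\langle h\rangle$ is inversion or the identity, via the multiplier argument above. The main obstacle, or rather the main point requiring care, is the bookkeeping that $g$ does not act on a single fiber: one must either invoke that $G_F$ acts faithfully on a general fiber and spread the fixed points out into a section (or a double section) of $\pi$ over which $G$ acts, or argue directly with the induced action on the generic fiber over the function field, where $g$ composed with the base automorphism $p(g)$ does give an honest automorphism. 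Once that is set up, the argument is a short computation with eigenvalues. Since this lemma is quoted from \cite[Lemma 5.2]{Serre 2}, I would in fact just cite it, but the sketch above is how I would reconstruct the proof.
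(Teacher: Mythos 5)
The paper gives no proof of this lemma --- it is imported verbatim from Serre's Lemma 5.2 --- and your reconstruction (pass to $\overline K$, observe that $g$ permutes the two-point fixed locus of $h$ on the general fibers, i.e.\ the section or bisection it sweeps out, and compare multipliers $\zeta$ versus $\zeta^{-1}$ at the two fixed points to conclude $ghg^{-1}=h^{\pm1}$) is exactly Serre's eigenvalue argument, which is evidently the intended one here. The sketch is correct; the only cosmetic slip is the phrase ``the commuting elements $g$ and $h$'', which is neither known a priori nor needed, since the identity $\zeta^k=\zeta$ already follows from $ghg^{-1}=h^k$ together with $g(P_0)=P_0$.
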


The following lemma follows from the proof of Proposition~\mbox{\cite[Proposition 3.2]{Yas}}. For the convenience of the reader, we write down this proof.

\begin{lemma}\label{lemma: G_F = C,D; G_B = C,D} 
Let $G_F$ be isomorphic to either $\mathbb{Z}/n\mathbb{Z}$ or $D_{2 n}$, and $G_B$ is isomorphic to either~$\mathbb{Z}/m\mathbb{Z}$ or~$D_{2m}$. Then $J(G) \leqslant 32$.
\end{lemma}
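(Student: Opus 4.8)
The plan is to bound $|G|$ relative to an abelian subgroup, or rather to produce a normal abelian subgroup of controlled index, by exploiting the short exact sequence $1 \to G_F \to G \xrightarrow{p} G_B \to 1$ together with the constraint of Lemma~\ref{lemma: eigenvalue}. The key observation is that $G_F$ is a subgroup of $\PGL_2(K)$ which is cyclic or dihedral, so it contains a characteristic cyclic subgroup $C = \mathbb{Z}/n\mathbb{Z}$ of index at most $2$ (the whole $G_F$ if it is cyclic, the rotation subgroup if it is dihedral). Since $C$ is characteristic in $G_F$ and $G_F$ is normal in $G$, the subgroup $C$ is normal in $G$. Now conjugation by any $g \in G$ normalizes $C$, hence normalizes every subgroup $\langle h \rangle$ with $h \in C$ (since $C$ is cyclic, its subgroups are characteristic in $C$); by Lemma~\ref{lemma: eigenvalue}, $ghg^{-1} \in \{h, h^{-1}\}$ for all $h \in C$. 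Therefore the conjugation action of $G$ on $C$ factors through a homomorphism $G \to \Aut(C)$ whose image lies in $\{\pm 1\} \subseteq (\mathbb{Z}/n\mathbb{Z})^\times$, so the centralizer $Z = Z_G(C)$ has index at most $2$ in $G$.

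Next I would analyze $Z$. Inside $Z$ the subgroup $C$ is central. Consider the quotient $Z/C$: it is a subgroup of $G/C$, which sits in an exact sequence with $G_F/C$ (of order $\le 2$) and $G_B$. So $Z/C$ is, up to an index-$\le 2$ subgroup, contained in $G_B$, which is cyclic or dihedral; in any case $Z/C$ has a cyclic subgroup of index at most $4$ (index $\le 2$ coming from $G_F/C$, and index $\le 2$ from the dihedral structure of $G_B$). Let $\overline{D} \subseteq Z/C$ be such a cyclic subgroup and let $D$ be its preimage in $Z$; then $[Z:D] \le 4$, and $D$ is a central extension of the cyclic group $\overline{D}$ by the central cyclic group $C$, hence $D$ is abelian. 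Pulling this back up: $[G:D] \le 2 \cdot 4 = 8$. We now have an abelian subgroup $D \subseteq G$ of index at most $8$. Applying Theorem~\ref{theo: weak jordan constant} produces a characteristic abelian subgroup $N$ with $[G:N] \le [G:D]^2 \le 64$ — which is slightly too weak for the claimed bound $32$, so I would need to sharpen one of the steps.

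The main obstacle, and where the stated bound $32$ rather than $64$ must come from, is getting a \emph{normal} abelian subgroup of index dividing $32$ directly rather than going through the squaring in Theorem~\ref{theo: weak jordan constant}. I expect the sharpening to come from the following: rather than taking the full preimage $D$, observe that the centralizer argument already gives a normal (not merely arbitrary) abelian subgroup in many cases, and that the two "index $\le 2$" contributions from the dihedral structures of $G_F$ and $G_B$ cannot both be "wasted" independently. More precisely, one should track which subgroup is normal: $C$ is normal in $G$; the preimage in $G$ of the rotation subgroup of $G_B$ is normal in $G$ (since rotations are characteristic in a dihedral group, when $G_B$ is dihedral of order $\ge 6$; the order-$4$ case $D_4 \cong (\mathbb{Z}/2)^2$ needs separate, easy treatment); intersecting these normal subgroups and then passing to the centralizer of $C$ inside that intersection yields a normal abelian subgroup. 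Carefully bookkeeping the indices — one factor of $2$ for $Z = Z_G(C) \trianglelefteq G$, one for $G_F/C$, one for the rotation subgroup of $G_B$ inside $G_B$, and one more subtlety if $G_B$ is dihedral and the reflections act by $-1$ on $C$ versus trivially — should give a normal abelian subgroup of index at most $32$. The delicate point is verifying normality at each stage and handling the small exceptional cases ($G_F$ or $G_B$ equal to $D_4$, or $n \le 2$), where "rotation subgroup" is not canonically defined; these I would dispatch by hand, noting that then $G_F$ or $G_B$ is an elementary abelian $2$-group of order $\le 4$ and the estimates only improve.
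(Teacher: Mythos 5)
Your argument, as far as it is actually carried out, establishes only $J(G)\leqslant 64$, not the claimed $32$: the step from $64$ to $32$ is left as an uncompleted sketch, so there is a genuine gap. The fix is much simpler than the route you propose at the end. You already have a normal subgroup $Z=Z_G(C)$ of index at most $2$ in $G$ and an abelian subgroup $D\subseteq Z$ with $[Z:D]\leqslant 4$. Instead of applying Theorem~\ref{theo: weak jordan constant} to $G$ (which squares the index $8$ and yields $64$), apply it to $Z$: this gives a characteristic abelian subgroup $N\subseteq Z$ with $[Z:N]\leqslant 16$. Since $Z$ is normal in $G$ (it is the centralizer of the normal subgroup $C$) and a characteristic subgroup of a normal subgroup is normal, $N$ is a normal abelian subgroup of $G$ of index at most $2\cdot 16=32$. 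This is exactly the paper's bookkeeping: it produces an abelian subgroup of index at most $4$ inside a normal subgroup $G'$ of index at most $2$ (the preimage of the rotation subgroup of $G_B$, with the abelian subgroup $\langle h,g^2\rangle$ supplied by Lemma~\ref{lemma: eigenvalue}) and then invokes Theorem~\ref{theo: weak jordan constant} \emph{inside} $G'$.

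A second, smaller issue is the exceptional case $G_F\simeq D_4\simeq(\mathbb{Z}/2\mathbb{Z})^2$, which you defer with the remark that the estimates ``only improve.'' That is not automatic for your centralizer argument: here $G_F$ has no characteristic cyclic subgroup of index at most $2$, the conjugation action of $G$ on $(\mathbb{Z}/2\mathbb{Z})^2$ lands in $\Aut\left((\mathbb{Z}/2\mathbb{Z})^2\right)\simeq\mathfrak{S}_3$, so the relevant centralizer could a priori have index up to $6$, and Lemma~\ref{lemma: eigenvalue} gives no control because $g$ need not normalize any particular $\langle h\rangle$. The paper sidesteps this by not using $G_F$ at all in that case: with $G'=p^{-1}(G'_B)$ and $g\in G'$ mapping to a generator of the cyclic group $G'_B$, the cyclic subgroup $\langle g\rangle$ already has index at most $4=|G_F|$ in $G'$, and the same application of Theorem~\ref{theo: weak jordan constant} inside $G'$ gives the bound $32$.
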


\begin{proof}
Firstly, assume that $G_F$ is not isomorphic to $D_4\simeq(\mathbb{Z}/2\mathbb{Z})^2$. Then there is an~$h\in G_F$ such that the cyclic subgroup $\langle h\rangle\subset G_F$ is characteristic of index at most~$2$. At the same time, $G_B$ contains a normal cyclic subgroup $G'_B$ of index at most~$2$. Denote~$G' =~p^{-1}(G'_B)$. Then $G'$ is a normal subgroup in $G$ of index at most $2$. Let~$g\in G'$ be such that $p(g)$ generates $G'_B$. The element $g$ normalizes the subgroup $\langle h\rangle$, then by Lemma~\ref{lemma: eigenvalue} either~$ghg^{-1}=h$, or~$ghg^{-1}=h^{-1}$. In any case, the element $g^2$ commutes with~$h$, hence the subgroup $\langle h, g^2\rangle\subset G'$ is an abelian subgroup of index at most~$4$. By Theorem~\ref{theo: weak jordan constant} there is an abelian characteristic subgroup $A$ of index at most 16 in the group~$G'$. Then~$A\subset G$ is the abelian normal subgroup of index at most 32, and~$J(G)\leqslant 32$.

Suppose $G_F$ is isomorphic to $D_4$. Again, $G_B$ contains a normal cyclic subgroup $G'_B$ of index at most~$2$. Denote~$G' =~p^{-1}(G'_B)$. Then $G'$ is a normal subgroup in $G$ of index at most $2$. Let~$g\in G'$ be such that $p(g)$ generates $G'_B$. Then subgroup $\langle g\rangle\subset G_F$ is abelian of index at most 4. By Theorem~\ref{theo: weak jordan constant} there is an abelian characteristic subgroup $A$ of index at most 16 in the group~$G'$. Then~$A\subset G$ is the abelian normal subgroup of index at most~$32$, and~$J(G)\leqslant 32$.
\end{proof}

The following three lemmas are purely group-theoretic assertions.

\begin{lemma}\label{lemma: G_F = C,D; G_B = A,S}
Let $G_F$ be isomorphic to either $\mathbb{Z}/n\mathbb{Z}$, or $D_{2n}$, and $G_B$ be isomorphic to either~$\mathfrak{A}_4$, or $\mathfrak{S}_4$. Then $J(G) \leqslant 48$.
\end{lemma}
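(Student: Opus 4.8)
The plan is to exploit the exact sequence $1 \to G_F \to G \xrightarrow{p} G_B \to 1$ together with the structure of $G_F$ as a subgroup of $\PGL_2(K)$ that is either cyclic or dihedral. The key point is that $G_F$ always contains a characteristic cyclic subgroup $\langle h \rangle$ of index at most $2$ (if $G_F \cong D_4 \cong (\mathbb Z/2\mathbb Z)^2$ this requires a small separate argument, as in Lemma~\ref{lemma: G_F = C,D; G_B = C,D}, since then every cyclic subgroup already has index $2$). Being characteristic in $G_F$, which is normal in $G$, the subgroup $\langle h \rangle$ is normal in $G$. Now Lemma~\ref{lemma: eigenvalue} says that every $g \in G$ satisfies $ghg^{-1} = h^{\pm 1}$, so conjugation gives a homomorphism $G \to \mathbb Z/2\mathbb Z$ whose kernel $C$ centralizes $h$. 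Thus $[G:C] \leqslant 2$, and $\langle h \rangle$ is central in $C$.

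Next I would push this down to the base. Inside $G_B \cong \mathfrak A_4$ or $\mathfrak S_4$ there is the normal Klein four-subgroup $V \cong (\mathbb Z/2\mathbb Z)^2$; let $G'' = p^{-1}(V) \cap C$, which is normal in $G$ of index at most $[G_B : V]\cdot[G:C] \leqslant 6 \cdot 2 = 12$ (the index is $12$ when $G_B \cong \mathfrak S_4$ and $6$ when $G_B \cong \mathfrak A_4$). The group $G''$ sits in an exact sequence $1 \to G_F \to G'' \to V' \to 1$ with $V' \subseteq V$, and since $\langle h \rangle$ is central in $G''$ and $G_F/\langle h\rangle$ has order at most $2$, the commutator structure of $G''$ is very constrained: $G''$ is nilpotent of class at most $2$ with $[G'' , G'']$ of order at most $2$. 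Concretely one shows $G''$ has an abelian subgroup of index at most $8$: take the preimage in $G''$ of $V'$ under the composite $G'' \to G_F/\langle h \rangle$... rather, since $G''/\langle h\rangle$ has order dividing $2 \cdot 4 = 8$ and $\langle h \rangle$ is central, the preimage of any cyclic subgroup of $G''/\langle h\rangle$ is abelian; choosing a maximal cyclic subgroup of the order-$\leqslant 8$ group $G''/\langle h \rangle$ (which has a cyclic subgroup of index at most $4$, being a $2$-group or close to it) gives an abelian subgroup $A_0 \subseteq G''$ with $[G'':A_0] \leqslant 4$. Wait — I must also handle the odd part of $|G_F/\langle h\rangle|$, but that is at most $2$, so the index bound $[G'' : A_0] \leqslant 4$ still holds after a harmless adjustment.

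Assembling the estimates: $[G : A_0] = [G : G''] \cdot [G'' : A_0] \leqslant 12 \cdot 4 = 48$. Since $A_0$ need not be normal, I would finish by invoking Theorem~\ref{theo: weak jordan constant}, which upgrades an abelian subgroup of index $k$ to a \emph{characteristic} (hence normal) abelian subgroup of index at most $k^2$ — but that would give $48^2$, far too weak. So instead the final step must be done more carefully: one produces the abelian subgroup already as a \emph{normal} subgroup of $G$ by arranging all the intermediate subgroups ($C$, $G''$, $A_0$) to be normal in $G$, which is possible because $\langle h \rangle \triangleleft G$, $C \triangleleft G$, $p^{-1}(V) \triangleleft G$, and the preimage in $G''$ of the (characteristic, as it is the unique Sylow or a verbal subgroup) relevant subgroup of $G''/\langle h\rangle$ is normal in $G$. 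The main obstacle is exactly this bookkeeping: tracking which subgroup at each stage is normal \emph{in $G$} (not merely in the previous term) so that the final abelian subgroup of index $\leqslant 48$ is normal and the bound $J(G) \leqslant 48$ follows directly without squaring. I expect the cleanest route is to separate the cases $G_B \cong \mathfrak A_4$ (where $[G:G''] \leqslant 6$ leaves extra room) and $G_B \cong \mathfrak S_4$ (the tight case), and within each to split on whether $G_F \cong D_4$ or not, mirroring the casework already used in Lemma~\ref{lemma: G_F = C,D; G_B = C,D}.
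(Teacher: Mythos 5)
Your proposal has a genuine gap, and it is exactly the one you flag yourself at the end: you produce an abelian subgroup $A_0\subseteq G$ of index at most $48$, but you never establish that it (or any abelian subgroup of index at most $48$) is \emph{normal} in $G$, and the definition of $J(G)$ requires normality. Your suggested repair --- that the relevant cyclic subgroup of $G''/\langle h\rangle$ is ``characteristic, as it is the unique Sylow or a verbal subgroup'' --- does not hold in general: $G''/\langle h\rangle$ is a group of order at most $8$ which can be, say, $(\mathbb{Z}/2\mathbb{Z})^3$, and there a cyclic subgroup of index $4$ has order $2$ and is never characteristic (the seven involutions are permuted transitively by automorphisms). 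So the preimage $A_0$ need not be normal in $G''$'s normalizer chain up to $G$, and falling back on Theorem~\ref{theo: weak jordan constant} gives only $48^2$, as you note. A smaller inaccuracy in the same spirit occurs at the start: $D_4\simeq(\mathbb{Z}/2\mathbb{Z})^2$ has \emph{no} characteristic cyclic subgroup of index $2$, so the phrase ``characteristic cyclic subgroup $\langle h\rangle$ of index at most $2$'' cannot be salvaged in that case by observing that every cyclic subgroup has index $2$.

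The fix is to aim lower: you do not need a large abelian subgroup, only a \emph{characteristic abelian} subgroup of $G_F$ of index at most $2$. If $G_F$ is cyclic or isomorphic to $D_4$, take all of $G_F$ (it is abelian); if $G_F\simeq D_{2n}$ with $n\geqslant 3$, take the rotation subgroup $\mathbb{Z}/n\mathbb{Z}$, which is characteristic. Being characteristic in the normal subgroup $G_F\triangleleft G$, this subgroup $G'_F$ is normal in $G$, and
$$[G:G'_F]\leqslant 2\,|G_B|\leqslant 2\cdot 24=48,$$
which gives $J(G)\leqslant 48$ immediately. This is the paper's argument; none of the machinery with Lemma~\ref{lemma: eigenvalue}, the centralizer $C$, or the Klein four-subgroup of $G_B$ is needed here (that machinery is what the paper uses in Lemma~\ref{lemma: G_F = C,D; G_B = C,D}, where $|G_B|$ is unbounded and one must exploit commutation to keep the index under control).
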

\begin{proof}
There is a characteristic abelian subgroup $G' _F$ of index at most 2 in $G_F$ (if~\mbox{$G_F \simeq D_4$}, then the desired subgroup is the entire group $D_4$). Since~$G_F$ is normal in $G$, then $G'_F$ is also normal in $G$ and $$[G:G' _F] \leqslant 2|G_B| \leqslant 48.$$ Hence, $J(G) \leqslant 48$.
\end{proof}

\begin{lemma}\label{lemma: G_F = A; G_B = C, D, A, S}
Let $G_F$ be isomorphic to $\mathfrak{A}_4$, and $G_B$ be isomorphic to either~$\mathbb{Z}/n\mathbb{Z}$, or~$D_{2n}$, or~$\mathfrak{A}_4$, or $\mathfrak{S}_4$. Then $J(G) \leqslant 72$.
\end{lemma}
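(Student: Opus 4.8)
The plan is to exploit the characteristic Klein four-subgroup of $\mathfrak{A}_4$. Recall that $\mathfrak{A}_4$ has a unique subgroup $V$ of order $4$, namely its commutator subgroup $V = [\mathfrak{A}_4,\mathfrak{A}_4]\simeq(\mathbb{Z}/2\mathbb{Z})^2$; it is characteristic in $G_F$, and since $G_F$ is normal in $G$, the subgroup $V$ is normal in $G$. I would split the argument according to the isomorphism type of $G_B$.

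If $G_B$ is isomorphic to $\mathfrak{A}_4$ or $\mathfrak{S}_4$, then $V$ itself does the job: it is a normal abelian subgroup of $G$ of index $[G:V] = |G_F|\cdot|G_B|/|V| = 3|G_B|\leqslant 72$, and hence $J(G)\leqslant 72$.

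If $G_B$ is isomorphic to $\mathbb{Z}/n\mathbb{Z}$ or $D_{2n}$, I would instead pass to the centralizer $C = C_G(V)$. Conjugation on $V$ gives a homomorphism $G\to\Aut(V)\simeq\mathfrak{S}_3$ with kernel $C$, so $[G:C]\leqslant 6$. Since the complement $\mathbb{Z}/3\mathbb{Z}\simeq\mathfrak{A}_4/V$ permutes the three nontrivial elements of $V$ cyclically, one has $C_{\mathfrak{A}_4}(V) = V$, so $C\cap G_F = V$ and therefore $C/V\simeq CG_F/G_F$ embeds into $G_B$; in particular $C/V$ is cyclic or dihedral, and $V$ is central in $C$ by construction. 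If $C/V$ is cyclic (which always holds when $G_B$ is cyclic), then $C$, being a central extension of a cyclic group, is abelian, and $J(G)\leqslant [G:C]\leqslant 6$. If $C/V$ is dihedral, then it has a cyclic subgroup of index $2$ whose preimage in $C$ is again a central extension of a cyclic group, hence an abelian subgroup of index $2$ in $C$; by Theorem~\ref{theo: weak jordan constant} the group $C$ then contains a characteristic abelian subgroup of index at most $4$, which is normal in $G$ (being characteristic in the normal subgroup $C$) of index at most $6\cdot 4 = 24$. Combining the two cases, $J(G)\leqslant 72$.

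The only point that needs care is the identity $C_{\mathfrak{A}_4}(V) = V$: it is what guarantees that the possibly unbounded parameter $n$ (when $G_B\simeq\mathbb{Z}/n\mathbb{Z}$ or $D_{2n}$) gets absorbed into the abelian group $C$, while the index $[G:C]$ is controlled purely by $|\Aut(V)| = 6$. I do not expect any real obstacle beyond the bookkeeping in the case analysis on $C/V\leqslant G_B$. Alternatively, in the cyclic case $G_B\simeq\mathbb{Z}/n\mathbb{Z}$ one could invoke Proposition~\ref{prop: A_4 x Zm} directly, and then reduce the dihedral case to it by passing to the index-$2$ subgroup over the rotation subgroup of $D_{2n}$.
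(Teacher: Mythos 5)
Your proposal is correct. The case $G_B\simeq\mathfrak{A}_4$ or $\mathfrak{S}_4$ is word for word the paper's argument: the Klein four-subgroup $V$ is characteristic in $G_F$, hence normal in $G$ of index $3|G_B|\leqslant 72$. For $G_B$ cyclic or dihedral, however, you take a genuinely different route. The paper invokes Proposition~\ref{prop: A_4 x Zm} (the structural result on extensions of cyclic groups by $\mathfrak{A}_4$) to produce a normal subgroup isomorphic to $\mathfrak{A}_4\times\mathbb{Z}/k\mathbb{Z}$ of index at most $2$ (resp.\ $4$ after first passing to the rotation subgroup of $D_{2n}$), and then extracts the characteristic abelian subgroup $\mathfrak{N}\times\mathbb{Z}/k\mathbb{Z}$ of index $3$, getting $J(G)\leqslant 6$ (resp.\ $12$). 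You instead work with $C=C_G(V)$: the key identities $[G:C]\leqslant|\Aut(V)|=6$ and $C\cap G_F=C_{\mathfrak{A}_4}(V)=V$ force $C/V$ to embed in $G_B$, and since $V\subseteq Z(C)$ the standard fact that a group with cyclic central quotient is abelian makes $C$ abelian when $C/V$ is cyclic (giving $J(G)\leqslant 6$), while in the dihedral case Theorem~\ref{theo: weak jordan constant} applied inside $C$ gives a characteristic abelian subgroup of index at most $4$ in $C$, hence $J(G)\leqslant 24$. Your bounds ($6$ and $24$ versus the paper's $6$ and $12$) are marginally weaker in the dihedral case but amply sufficient for the stated $72$; in exchange your argument is self-contained and avoids the external Proposition~\ref{prop: A_4 x Zm} entirely, which you correctly note would also work and is in fact the paper's route.
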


\begin{proof}
Firstly, let us consider the case when $G_B$ is isomorphic to either $\mathfrak{A}_4$ or $\mathfrak{S}_4$. Since~$G_F$ is isomorphic to $\mathfrak{A}_4$, it contains the characteristic abelian Klein subgroup $G'_F$ of index 3. Hence $G'_F$ is a normal abelian subgroup in $G$ of index $$[G:G'_F] \leqslant 3|G_B| \leqslant 72.$$ Therefore $J(G) \leqslant 72$.

Now let $G_B$ be isomorphic to $\mathbb{Z}/n\mathbb{Z}$. We have the following exact sequence: $$1 \xrightarrow{}{} \mathfrak{A}_4 \xrightarrow{}{} G\xrightarrow{}{} \mathbb{Z}/n\mathbb{Z}\xrightarrow{}{} 0.$$
In this case, according to Proposition~\ref{prop: A_4 x Zm}, there is a normal subgroup $G'$ in $G$ of index at most~$2$ and isomorphic to $\mathfrak{A}_4\times\mathbb{Z}/k\mathbb{Z}$. Denote by $\mathfrak{N}$ the Klein four-subgroup in $\mathfrak{A}_4$. Then~$\mathfrak{N}\times\mathbb{Z}/k\mathbb{Z}$ is a unique abelian subgroup of index $3$ in $G'$, therefore it is a characteristic subgroup. It follows that $G$ contains a normal abelian subgroup of index at most $6$, that is, $J(G) \leqslant 6$.

If $G_B$ is isomorphic to $D_{2n}$, then we have the following exact sequence: $$1\xrightarrow{}{}\mathfrak{A}_4\xrightarrow{}{} G\xrightarrow{}{} D_{2 n}\xrightarrow{}{} 0.$$ 
Consider the cyclic subgroup~$\mathbb{Z}/n\mathbb{Z}\subset D_{2n}$ and its preimage $\Tilde{G} = p^{-1}(\mathbb{Z}/n\mathbb{Z})$. According to Proposition~\ref{prop: 
A_4 x Zm}, there is a normal subgroup $G'$ in $\Tilde{G}$ of index at most $2$ and isomorphic to $\mathfrak{A}_4\times\mathbb{Z}/k\mathbb{Z}$. Note that $G'$ is also a normal in $G$ of index at most $4$ (as a preimage of normal). Similarly to the previous case, the group $G'$ contains a characteristic abelian subgroup of index $3$, which means that $G$ contains a normal abelian subgroup of index at most $12$, that is, $J(G) \leqslant 12$. \end{proof}

\begin{lemma}\label{lemma: G_F = S; G_B = C, D, A, S}
Let $G_F$ be isomorphic to $\mathfrak{S}_4$, and $G_B$ is isomorphic to either~$\mathbb{Z}/n\mathbb{Z}$, or $D_{2n}$, or~$\mathfrak{A}_4$, or $\mathfrak{S}_4$. Then $J(G) \leqslant 72$.
\end{lemma}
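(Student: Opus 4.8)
The plan is to reduce the four cases to one uniform argument by pulling back, along $p$, the largest normal abelian subgroup of $G_B$, and then invoking Proposition~\ref{prop: ext of S_n} to split the resulting extension. First I would record the elementary fact that each of the groups $\mathbb{Z}/m\mathbb{Z}$, $D_{2m}$, $\mathfrak{A}_4$, $\mathfrak{S}_4$ contains a normal abelian subgroup $B_0$ of index at most $6$: the whole group in the cyclic case, the rotation subgroup $\mathbb{Z}/m\mathbb{Z}$ (of index $2$) in the dihedral case, and the Klein four-subgroup $V_4$ (of index $3$, respectively $6$) in the cases $\mathfrak{A}_4$, respectively $\mathfrak{S}_4$.

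Next, set $\widetilde{G} = p^{-1}(B_0)$. Since $B_0$ is normal in $G_B$, the subgroup $\widetilde{G}$ is normal in $G$, and $[G:\widetilde{G}] = [G_B:B_0] \leqslant 6$. Moreover $\widetilde{G}$ fits into the exact sequence $1 \to \mathfrak{S}_4 \to \widetilde{G} \to B_0 \to 0$ with $B_0$ a finite abelian group, so Proposition~\ref{prop: ext of S_n} (applied with $n = 4 \neq 6$) provides an isomorphism $\widetilde{G} \cong \mathfrak{S}_4 \times B_0$ under which $G_F = \mathfrak{S}_4$ is identified with the first factor.

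Now I would exhibit a normal abelian subgroup of $G$ lying inside $\widetilde{G}$. Since $Z(\mathfrak{S}_4)$ is trivial, the center $Z(\widetilde{G})$ is exactly the second factor $B_0$; being characteristic in $\widetilde{G}$, and $\widetilde{G}$ being normal in $G$, it is normal in $G$. On the other hand, the Klein four-subgroup $V_4 \subset \mathfrak{S}_4 = G_F$ is characteristic in $G_F$ (it is the unique normal subgroup of order $4$ in $\mathfrak{S}_4$), hence normal in $G$, and it lies in the first factor of $\widetilde{G}$. Therefore $A := V_4 \cdot Z(\widetilde{G})$ is an internal direct product $V_4 \times B_0$, since the two subgroups commute and meet trivially; thus $A$ is abelian, and it is normal in $G$ as a product of two normal subgroups. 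Its index is
$$[G:A] = [G:\widetilde{G}] \cdot [\widetilde{G}:A] = [G_B:B_0] \cdot \frac{|\mathfrak{S}_4|}{|V_4|} \leqslant 6 \cdot 6 = 36,$$
so in fact $J(G) \leqslant 36 \leqslant 72$.

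The main obstacle, and the reason one cannot simply take $V_4 \subset G$ directly as in the $\mathfrak{A}_4$-fiber case of Lemma~\ref{lemma: G_F = A; G_B = C, D, A, S}, is that $V_4$ has index $6$ (not $3$) in $\mathfrak{S}_4$, so the naive estimate $[G:V_4] = 6\,|G_B|$ is far too weak when $G_B$ is $\mathfrak{A}_4$ or $\mathfrak{S}_4$. Passing to $\widetilde{G}$ first and using that the extension $1 \to \mathfrak{S}_4 \to \widetilde{G} \to B_0 \to 0$ splits as a direct product (Proposition~\ref{prop: ext of S_n}) is exactly what replaces the factor $|G_B|$ by the much smaller $[G_B:B_0] \leqslant 6$. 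The only points needing care are verifying that both $V_4$ and $Z(\widetilde{G})$ are normal in all of $G$ (not merely in $G_F$, respectively in $\widetilde{G}$), and that their product is direct — both of which follow from the ``characteristic-in-normal'' principle together with the triviality of $Z(\mathfrak{S}_4)$.
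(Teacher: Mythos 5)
Your proof is correct and follows essentially the same route as the paper: pull back a normal abelian subgroup of $G_B$ of index at most $6$, split the resulting extension via Proposition~\ref{prop: ext of S_n}, and exhibit a normal abelian subgroup of index $6$ inside the preimage. The only difference is that you treat all four cases uniformly, which incidentally sharpens the paper's bound in the case $G_B\simeq\mathfrak{A}_4$ (where the paper settles for the cruder estimate $[G:V_4]\leqslant 6|G_B|=72$, while your argument gives $36$ there as well).
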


\begin{proof} 
Firstly, assume that $G_B$ is isomorphic to $\mathbb{Z}/n\mathbb{Z}$, we have the following exact sequence: $$1\xrightarrow{}{}\mathfrak{S}_4\xrightarrow{}{} G\xrightarrow{}{}\mathbb{Z}/n\mathbb{Z} \xrightarrow{}{} 0.$$
By Proposition~\ref{prop: ext of S_n}, the group $G$ is isomorphic to the direct product $\mathfrak{S}_4\times\mathbb{Z}/n\mathbb{Z}$, in particular, it contains a normal abelian subgroup $\mathfrak{N}\times\mathbb{Z}/n\mathbb{Z}$ of index $6$, that is,~$J(G) \leqslant 6$.

Secondly, assume that $G_B$ is isomorphic to $\mathfrak{A}_4$. Since $G_F$ is isomorphic to $\mathfrak{S}_4$, it contains the characteristic abelian Klein four-subgroup $G'_F$ of index 6. Hence $G'_F$ is a normal abelian subgroup in $G$ of index $$[G:G'_F] \leqslant 6|G_B| \leqslant 72.$$ Therefore $J(G) \leqslant 72$.

Now let $G_B$ be isomorphic to $D_{2n}$, then we have the following exact sequence: $$1 \xrightarrow{}{}\mathfrak{S}_4\xrightarrow{}{} G\xrightarrow{}{}D_{2n}\xrightarrow{}{} 0.$$ Let $G'\subset G$ be the preimage of the subgroup $\mathbb{Z}/n\mathbb{Z}\subset D_{2n}$. Then $G'$ is normal, since it is a preimage of normal, and, according to Proposition~\ref{prop: ext of S_n}, it is isomorphic to the direct product $\mathfrak{S}_4\times\mathbb{Z}/n\mathbb{Z}$. Then~$\mathfrak{N}\times\mathbb{Z}/k\mathbb{Z}$ is a unique normal abelian subgroup of index~$6$ in~$G'$, therefore it is a characteristic subgroup. It follows that $G$ contains a normal abelian subgroup of index at most $12$, that is, $J(G) \leqslant 12$.

If $G_B$ is isomorphic to $\mathfrak{S}_4$, then we have the following exact sequence:
$$1\xrightarrow{}{} \mathfrak{S}_4\xrightarrow{}{} G\xrightarrow{}{}\mathfrak{S}_4\xrightarrow{}{} 0.$$ Let $G'\subset G$ be a preimage of the Klein four-subgroup. Then $G'$ is normal, since it is a preimage of normal, and $[G:G'] = 6$. Since the Klein subgroup is abelian, then by Proposition~\ref{prop: ext of S_n} the group $G'$ is isomorphic to the direct product $\mathfrak{S}_4\times(\mathbb{Z}/2\mathbb{Z})^2$. Therefore,~$G'$ contains an abelian characteristic subgroup isomorphic to $(\mathbb{Z}/2\mathbb{Z})^4$, of index~$6$, hence $G$ contains a normal abelian subgroup of index~$36$, that is, $J(G)\leqslant 36$.
\end{proof}

From the previous four lemmas we obtain the following corollary.

\begin{sled}\label{sled: J(Aut(conic bundles))} 
Let $K$ be a field of characteristic $0$, such that either $\sqrt{5}\not\in K$, or~$-1$ is not a sum of two squares in $K$. Let $X$ be a smooth rational surface over the field $K$, and let~$\pi: X\xrightarrow{}{}\mathbb{P}^1_K$ be a conic bundle. Then $$J( \Aut(X,\pi)) \leqslant 72.$$
\end{sled}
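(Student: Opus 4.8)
The plan is to take an arbitrary finite subgroup $G \subset \Aut(X,\pi)$, fit it into the short exact sequence
$$1 \xrightarrow{} G_F \xrightarrow{} G \xrightarrow{p} G_B \xrightarrow{} 1$$
described above, and then estimate $J(G)$ by running through all the possibilities for the pair $(G_F, G_B)$. Since both $G_F$ and $G_B$ embed into $\PGL_2(K)$, Proposition~\ref{prop:Beauville} tells us that each of them is cyclic, dihedral, or isomorphic to $\mathfrak{A}_4$, $\mathfrak{S}_4$ or $\mathfrak{A}_5$. The hypothesis on $K$ — namely that either $\sqrt5 \notin K$ or $-1$ is not a sum of two squares in $K$ — is exactly the negation of the condition in Proposition~\ref{prop:Beauville} under which $\PGL_2(K)$ contains $\mathfrak{A}_5$. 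Hence neither $G_F$ nor $G_B$ can be isomorphic to $\mathfrak{A}_5$, and we are left with the cases where each of $G_F, G_B$ is one of cyclic, dihedral, $\mathfrak{A}_4$, $\mathfrak{S}_4$.

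Next I would simply invoke the four lemmas proved just above, which together cover precisely these remaining cases. If $G_F$ is cyclic or dihedral and $G_B$ is cyclic or dihedral, Lemma~\ref{lemma: G_F = C,D; G_B = C,D} gives $J(G) \leqslant 32$. If $G_F$ is cyclic or dihedral and $G_B \in \{\mathfrak{A}_4, \mathfrak{S}_4\}$, Lemma~\ref{lemma: G_F = C,D; G_B = A,S} gives $J(G) \leqslant 48$. If $G_F \simeq \mathfrak{A}_4$ (with $G_B$ arbitrary among the four types), Lemma~\ref{lemma: G_F = A; G_B = C, D, A, S} gives $J(G) \leqslant 72$; and if $G_F \simeq \mathfrak{S}_4$, Lemma~\ref{lemma: G_F = S; G_B = C, D, A, S} gives $J(G) \leqslant 72$. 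Taking the maximum over all cases yields $J(G) \leqslant 72$ for every finite $G \subset \Aut(X,\pi)$, and therefore $J(\Aut(X,\pi)) \leqslant 72$ by Definition~\ref{def: J.}.

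There is essentially no hard part here: the corollary is a bookkeeping assembly of the preceding lemmas, and the only genuine input is recognizing that the field hypothesis rules out the $\mathfrak{A}_5$ case via Proposition~\ref{prop:Beauville}. The one point that deserves a sentence of care is why $G_F$ and $G_B$ both land in $\PGL_2(K)$: for $G_B$ this is because it acts faithfully on the base $\mathbb{P}^1_K$, and for $G_F$ this is because, being finite, it acts faithfully on some smooth rational fiber, as explained in the text introducing the exact sequence. (Strictly speaking one should note that "rational fiber" here means a fiber isomorphic to $\mathbb{P}^1_K$; the existence of one with faithful $G_F$-action is the standard fact recalled above.) Everything else is a direct citation, so the proof is short.

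\begin{proof}
Let $G \subset \Aut(X,\pi)$ be a finite subgroup. As explained above, $G$ fits into the exact sequence
$$1 \xrightarrow{} G_F \xrightarrow{} G \xrightarrow{p} G_B \xrightarrow{} 1,$$
where $G_B \subset \PGL_2(K)$ and $G_F \subset \PGL_2(K)$. By Proposition~\ref{prop:Beauville}, each of $G_F$ and $G_B$ is isomorphic to one of $\mathbb{Z}/n\mathbb{Z}$, $D_{2n}$, $\mathfrak{A}_4$, $\mathfrak{S}_4$ or $\mathfrak{A}_5$. By assumption, either $\sqrt5 \notin K$ or $-1$ is not a sum of two squares in $K$, so by the second part of Proposition~\ref{prop:Beauville} the group $\PGL_2(K)$ does not contain $\mathfrak{A}_5$. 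Hence neither $G_F$ nor $G_B$ is isomorphic to $\mathfrak{A}_5$, and each of them is cyclic, dihedral, or isomorphic to $\mathfrak{A}_4$ or $\mathfrak{S}_4$.

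We now distinguish cases according to $G_F$. If $G_F$ is cyclic or dihedral and $G_B$ is cyclic or dihedral, then $J(G) \leqslant 32$ by Lemma~\ref{lemma: G_F = C,D; G_B = C,D}. If $G_F$ is cyclic or dihedral and $G_B$ is isomorphic to $\mathfrak{A}_4$ or $\mathfrak{S}_4$, then $J(G) \leqslant 48$ by Lemma~\ref{lemma: G_F = C,D; G_B = A,S}. If $G_F \simeq \mathfrak{A}_4$, then $J(G) \leqslant 72$ by Lemma~\ref{lemma: G_F = A; G_B = C, D, A, S}. Finally, if $G_F \simeq \mathfrak{S}_4$, then $J(G) \leqslant 72$ by Lemma~\ref{lemma: G_F = S; G_B = C, D, A, S}. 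In every case $J(G) \leqslant 72$. Since $G$ was an arbitrary finite subgroup of $\Aut(X,\pi)$, we conclude that $J(\Aut(X,\pi)) \leqslant 72$.
\end{proof}
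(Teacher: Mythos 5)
Your proof is correct and follows exactly the same route as the paper: use the exact sequence $1\to G_F\to G\to G_B\to 1$ with $G_F, G_B\subset\PGL_2(K)$, note that the field hypothesis excludes $\mathfrak{A}_5$ via Proposition~\ref{prop:Beauville}, and then dispatch the remaining cases with Lemmas~\ref{lemma: G_F = C,D; G_B = C,D}--\ref{lemma: G_F = S; G_B = C, D, A, S}. Your remark on why $G_F$ and $G_B$ land in $\PGL_2(K)$ is a slightly more careful articulation than the paper gives, but the argument is the same.
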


\begin{proof}
Let $G$ be a finite subgroup in $J(\Aut(X,\pi))$. Then, as already discussed, the group $G$ is included in the exact sequence $$1 \xrightarrow{}{} G_F\xrightarrow{}{} G\xrightarrow{}{} G_B\xrightarrow{}{} 1,$$ where $G_F$ and $G_B$ are finite subgroups in $\PGL_2(K)$. By Proposition~\ref{prop:Beauville}, the group~$\PGL_2(K)$ can contain only the following finite subgroups: $\mathbb{Z}/n\mathbb{Z}$, $D_{2n}\;(\textit{for} \;n\geqslant 2)$, $\mathfrak{A}_4$ and $\mathfrak{S}_4$. The case when $G_F$ and $G_B$ are cyclic or dihedral groups is handled by Lemma~\ref{lemma: G_F = C,D; G_B = C,D}. The case when $G_F$ is cyclic or dihedral, and $G_B$ is isomorphic to either $\mathfrak{A}_4$ or $\mathfrak{S}_4$ is considered in Lemma~\ref{lemma: G_F = C,D; G_B = A,S}. Finally, the cases when $G_F$ is isomorphic to either $\mathfrak{A}_4$ or $\mathfrak{S}_4$ are handled by  Lemmas~\ref{lemma: G_F = A; G_B = C, D, A, S} and~\ref{lemma: G_F = S; G_B = C, D, A, S}. Thus~$J(G)$ does not exceed $72$ in all possible cases, hence we have the inequality~$J(\Aut(X,\pi))) \leqslant 72$. \end{proof}

\section{Del Pezzo surfaces}\label{sect: del Pezzo}

In this section we estimate Jordan constants of automorphism groups of rational del Pezzo surfaces. Recall that a \textit{del Pezzo surface} is a smooth projective surface with an ample anticanonical class. The self-intersection index of the canonical class is called the degree of a del Pezzo surface. Also recall that the degree of the del Pezzo surface can take integer values from $1$ to~$9$.

First of all, we need a theorem about the Jordan constants of the automorphism group of the rational del Pezzo surface of degree 9, that is, the projective plane.

\begin{theorem}[{\cite[Theorem 1.2]{Hu}}, weak version]\label{theo: J(GL)}
Let $K$ be a field of characteristic $0$. Then the Jordan constant of the group $\PGL_3(K)$ takes the following values:
\begin{enumerate}

\item $J(\PGL_3(K)) = 360$ if and only if $\omega \in K$ and $\sqrt{5}\in K$;
\item $J(\PGL_3(K)) = 168$ if and only if $\sqrt{-7}\in K$, and at the same time either~$\omega\not\in K$, or $\sqrt{5}\not\in K$;
\item$J(\PGL_3(K)) \leqslant 60$ if and only if $\sqrt{-7}\not\in K$, and at the same time either ~$\omega\not\in K$, or $\sqrt{5} \not\in K$.
\end{enumerate}
\end{theorem}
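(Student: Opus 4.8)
The plan is to reduce to the classical classification of finite subgroups of $\PGL_3$ over an algebraically closed field, and then to determine, for the two ``large'' groups on that list, over which subfields of $\overline K$ they are realizable. First I would observe that a finite subgroup $G\subset\PGL_3(K)$ is in particular a finite, Galois-stable subgroup of $\PGL_3(\overline K)$ whose preimage $\widetilde G$ in $\operatorname{SL}_3(\overline K)$ is a finite central extension of $G$ by $\mu_3$. Hence $\widetilde G$ appears in Blichfeldt's classification of finite subgroups of $\operatorname{SL}_3$ over an algebraically closed field of characteristic zero: either $\widetilde G$ is intransitive, so its image in $\PGL_3(\overline K)$ fixes a point or a line of $\mathbb P^2$; or it is transitive but imprimitive, hence monomial; or it is primitive, and then $G$ is isomorphic to $\mathfrak A_5$, $\mathfrak A_6$, $\operatorname{PSL}_2(\mathbb F_7)$, or one of the Hessian groups of order $36$, $72$ or $216$.

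Next I would compute the Jordan constant of each type. If $G$ is intransitive, then $G$ is a central extension of a finite subgroup $H\subset\PGL_2(\overline K)$ (acting on a $G$-invariant line $\cong\mathbb P^1$) by a finite cyclic group; by Proposition~\ref{prop:Beauville} the group $H$ is cyclic, dihedral, $\mathfrak A_4$, $\mathfrak S_4$ or $\mathfrak A_5$, and in every case $G$ has a normal abelian subgroup of index at most $60$, so $J(G)\le 60$. If $G$ is monomial, the image of the diagonal torus is a normal abelian subgroup of index dividing $6$, so $J(G)\le 6$. Each Hessian group contains the normal abelian subgroup $(\mathbb Z/3)^2$ of index at most $24$, so $J(G)\le 24$. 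Finally $\mathfrak A_5$, $\operatorname{PSL}_2(\mathbb F_7)$ and $\mathfrak A_6$ are simple and non-abelian, hence $J(\mathfrak A_5)=60$, $J(\operatorname{PSL}_2(\mathbb F_7))=168$ and $J(\mathfrak A_6)=360$. Putting this together, $J(\PGL_3(K))=360$ if $\mathfrak A_6$ embeds into $\PGL_3(K)$; it equals $168$ if $\mathfrak A_6$ does not embed but $\operatorname{PSL}_2(\mathbb F_7)$ does; and it is at most $60$ otherwise — below $60$ the weak version makes no sharper claim.

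It then remains to decide when $\mathfrak A_6$ and $\operatorname{PSL}_2(\mathbb F_7)$ embed into $\PGL_3(K)$. Since both are simple, every homomorphism to $\PGL_3(K)$ is trivial or injective, so I only need to decide when each admits a faithful $3$-dimensional projective representation over $K$. Such a representation lifts to a linear representation over $K$ of a finite central extension, and inspection of the Schur covers shows that the only cover with a faithful $3$-dimensional irreducible representation is the triple cover $3\cdot\mathfrak A_6$ (the Valentiner group) in the first case, and $\operatorname{PSL}_2(\mathbb F_7)$ itself or $\operatorname{SL}_2(\mathbb F_7)$ in the second. One then computes character fields: the faithful $3$-dimensional irreducibles of $\operatorname{PSL}_2(\mathbb F_7)$ and of $\operatorname{SL}_2(\mathbb F_7)$ have character field $\mathbb Q(\sqrt{-7})$ and are realized over it, so $\operatorname{PSL}_2(\mathbb F_7)\hookrightarrow\PGL_3(K)$ if and only if $\sqrt{-7}\in K$; and restricting a Valentiner representation to $3\cdot\mathfrak A_5\cong\mathfrak A_5\times\mathbb Z/3$ — a tensor product of an icosahedral $3$-dimensional representation with a faithful character of $\mathbb Z/3$ — exhibits its character field as $\mathbb Q(\sqrt5,\omega)$, again realized over it, so $\mathfrak A_6\hookrightarrow\PGL_3(K)$ if and only if $\sqrt5,\omega\in K$. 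Substituting these criteria into the previous paragraph yields exactly the three cases of the theorem.

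The routine part is the collection of bounds in the second step. The substantive points are, first, invoking the classification of primitive finite subgroups of $\PGL_3(\overline K)$ and identifying for each simple group the Schur cover carrying its $3$-dimensional representation; and, second, the descent, i.e.\ passing from ``realizable over $\overline K$ and Galois-stable'' to ``conjugate into $\PGL_3(K)$''. The crux is verifying that the relevant Schur indices equal $1$ and, in the $\mathfrak A_6$ case, ruling out that a central cocycle valued in $\mu_3$ allows $\mathfrak A_6$ to be realized over a field smaller than $\mathbb Q(\sqrt5,\omega)$; this is exactly where the hypothesis $\omega\in K$, and not merely $\sqrt5\in K$, enters.
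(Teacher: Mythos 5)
The paper offers no proof of this statement at all: it is imported, in weakened form, from \cite[Theorem 1.2]{Hu}, so there is nothing internal to compare you against, and your proposal is a genuine reconstruction rather than a paraphrase. Its skeleton is correct. By the Blichfeldt--Mitchell classification every finite $G\subset\PGL_3(\overline K)$ is intransitive, monomial, or one of the primitive groups $\mathfrak A_5$, $\mathfrak A_6$, $\operatorname{PSL}_2(\mathbb F_7)$, or a Hessian group, and your Jordan bounds for each type are right; for the intransitive case you could bypass the centrality issue in your ``central extension of $H\subset\PGL_2$'' step by noting that such a $G$ fixes a $\overline K$-point of $\mathbb P^2$ or of the dual plane and invoking the paper's Corollary~\ref{lemma: action with fixed point}. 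This correctly reduces everything to deciding when $\mathfrak A_6$ and $\operatorname{PSL}_2(\mathbb F_7)$ embed into $\PGL_3(K)$, and your criteria ($\sqrt5,\omega\in K$, respectively $\sqrt{-7}\in K$) are the right ones. The two points you leave as ``one computes'' are exactly the substantive ones, and both go through: (i) for the ``if'' direction, the degree-$3$ characters of $\operatorname{PSL}_2(\mathbb F_7)$ and of the Valentiner group $3\cdot\mathfrak A_6$ have Schur index $1$ over their character fields $\mathbb Q(\sqrt{-7})$ and $\mathbb Q(\sqrt5,\omega)$ (explicit matrices go back to Klein and Wiman; alternatively, your restriction to $\mathfrak A_5\times\mathbb Z/3\mathbb Z$ already forces Schur index $1$, since that restriction is irreducible, occurs with multiplicity one, and is realizable over $\mathbb Q(\sqrt5,\omega)$); (ii) for the ``only if'' direction, one should pass from a finite lift $G'\subset\GL_3(K)$ to its derived subgroup, which by Gr\"un's lemma is a perfect central extension of the simple group, hence a quotient of $6\cdot\mathfrak A_6$ (respectively $\operatorname{SL}_2(\mathbb F_7)$) carrying a $3$-dimensional representation over $K$ whose character values then force $\mathbb Q(\sqrt5,\omega)\subseteq K$ (respectively $\mathbb Q(\sqrt{-7})\subseteq K$). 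With those two standard facts written out, your argument is a complete and self-contained proof of the weak version, which is more than the paper itself provides.
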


We now proceed to evaluate the Jordan constants of automorphism groups of rational del Pezzo surfaces. But it is worth noting that rationality is not used in the following lemma.

\begin{lemma}\label{lemma: dP cases; not P1xP1}
Let $K$ be a field of characteristic $0$, and let $X$ be a del Pezzo surface of degree $d \neq 2,8,9$ over $K$. Then $$J(\Aut(S)) \leqslant 120.$$
\end{lemma}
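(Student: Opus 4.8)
The plan is to go through the possible degrees $d \in \{1,3,4,5,6,7\}$ and in each case control the finite automorphism group by combining the geometric structure of $X$ with the group-theoretic and linear-algebra facts already collected. The unifying principle: either $X$ has a canonical or near-canonical fixed point (or finite fixed set) forcing $\Aut$ into $\GL_2(\overline K)$ via Corollary~\ref{lemma: action with fixed point}, giving the bound $60$; or $\Aut(\overline X)$ is small enough that the bound is immediate; or the group sits in an exact sequence of the type handled by the propositions of Section~\ref{sect: preliminaries}.

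First, the easy degrees. For $d=7$, the del Pezzo surface $\overline X$ is unique, and $\Aut(\overline X)$ is solvable of bounded order (it preserves the unique $(-1)$-curve meeting two others, hence fixes the intersection points and a configuration that pins it down); in fact $\Aut$ embeds into the stabilizer of a point of $\mathbb P^2$ after blowing down, so $J(\Aut(X)) \le 60$ by Corollary~\ref{lemma: action with fixed point} applied at such a fixed point, and certainly $\le 120$. For $d=6$, $\Aut(\overline X)$ is an extension of $(\mathbb G_m)^2$ by $\mathfrak S_3 \times \mathbb Z/2\mathbb Z$, so every finite subgroup over any field has a normal abelian subgroup of index dividing $12$; thus $J \le 12 \le 120$. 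For $d=5$, $\Aut(\overline X) \cong \mathfrak S_5$, so $J(\Aut(X)) \le J(\mathfrak S_5) = 20 \le 120$ for any $K$ (the subgroup over $K$ is a subgroup of $\mathfrak S_5$).

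The substantive degrees are $d=1,3,4$. For $d=1$, the anticanonical linear system has a unique base point $P$, which is therefore a $G$-fixed $\overline K$-point for every finite $G \subset \Aut(X)$; Corollary~\ref{lemma: action with fixed point} gives $J(\Aut(X)) \le 60 \le 120$. For $d=3$, $X$ is a cubic surface in $\mathbb P^3$ and $\Aut(\overline X)$ acts on the $27$ lines; the relevant bound is that the maximal order of a finite subgroup, and more to the point its Jordan constant, is controlled — one knows $\Aut(\overline X) \subseteq W(E_6)$ and the largest automorphism group of a smooth cubic surface has order $648$, with a normal abelian (Heisenberg-type) subgroup; in all cases $J \le 120$. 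Concretely I would invoke the classification of automorphism groups of cubic surfaces, extract in each case a normal abelian subgroup of index at most $120$, and note this passes to subgroups over $K$. For $d=4$, $X$ is an intersection of two quadrics in $\mathbb P^4$; $\Aut(\overline X)$ contains a normal subgroup $(\mathbb Z/2\mathbb Z)^4$ with quotient a subgroup of $\mathfrak S_5$, hence $J(\Aut(X)) \le 2 \cdot |\mathfrak S_5| / \text{(something)} $ — more cleanly, $\Aut(\overline X) \cong (\mathbb Z/2\mathbb Z)^4 \rtimes H$ with $H \subseteq \mathfrak S_5$, and the normal subgroup $(\mathbb Z/2\mathbb Z)^4$ has index at most $120$, so $J \le 120$.

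The main obstacle is degree $3$: unlike the other cases there is no single fixed point and no uniformly tiny automorphism group, so one genuinely needs the finite list of possible automorphism groups of smooth cubic surfaces (over $\overline K$, then restricting to the $K$-forms) and must check the bound $J(G) \le 120$ for each, in particular for the largest groups of order $648$, $120$, $108$, etc. A secondary subtlety throughout is that one works over a general field $K$ of characteristic $0$: but since $\Aut(X)$ is a subgroup of $\Aut(\overline X)$ and the Jordan constant is monotone under passing to subgroups, it suffices to bound $J(\Aut(\overline X))$, reducing everything to the known classifications over algebraically closed fields. So the proof structure is: (i) reduce to $\overline K$ by monotonicity of $J$; (ii) dispose of $d = 7,6,5$ by explicit small $\Aut$; (iii) dispose of $d = 1$ by the anticanonical base point and Corollary~\ref{lemma: action with fixed point}; (iv) handle $d = 4$ via the semidirect product structure; (v) handle $d = 3$ by the classification of cubic surface automorphism groups, which is the only case requiring real case-work.
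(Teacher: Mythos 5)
Your proposal follows essentially the same route as the paper: blow down to get a fixed point and apply Corollary~\ref{lemma: action with fixed point} for $d=7$, use the base point of $|-K_S|$ for $d=1$, and for $d=3,4,5,6$ bound $J(\Aut(X))\leqslant J(\Aut(\overline{X}))$ using the classification of automorphism groups over $\overline{K}$. One small slip: $J(\mathfrak{S}_5)=120$, not $20$ (the only normal abelian subgroup of $\mathfrak{S}_5$ is trivial; index $20$ corresponds to a non-normal abelian subgroup), but the bound $J\leqslant 120$ for $d=5$ still holds simply because $|\mathfrak{S}_5|=120$.
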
 

\begin{proof}
We proceed case by case. 




Let $d = 7$. In this case, there is $(-1)$-curve on $S$ defined over~$K$ and invariant with respect to all automorphisms. Therefore, there exists an $\Aut(S)$-equivariant contraction on a smooth surface $S'$. Thus, the group $\Aut(S)$ effectively acts on $S'$ with a fixed point (the image of the contracted $(-1)$-curve), and by Corollary~\ref{lemma: action with fixed point} we have~$J(\Aut(S)) \leqslant 60$.

Let $d = 6$. Then by Theorem~\cite[Theorem 8.4.2]{D} we have the isomorphism $$\Aut\left(\overline{S}\right) \simeq \left(\overline{K}^\times\right)^2 \rtimes D_{12}.$$ Hence, $J(\Aut(S)) \leqslant J(\Aut\left(\overline{S}\right)) \leqslant 12$.

Let $d = 5$. Then by Theorem~\cite[Theorem 8.5.6]{D} we have the isomorphism $$\Aut\left(\overline{S}\right) \simeq\mathfrak{S}_5.$$ Therefore, $J(\Aut(S)) \leqslant J(\Aut\left(\overline{S}\right)) = 120$.

Let $d = 4$. Then by Theorem~\cite[Theorem 8.6.6]{D} we have an embedding $$\Aut\left(\overline{S}\right) \subset (\mathbb{Z}/2\mathbb{Z})^4\rtimes\mathfrak{S}_5.$$ Hence, $J(\Aut(S))\leqslant J(\Aut\left(\overline{S}\right)) \leqslant 120$ (one can prove a more precise estimate, but for our purposes, this is enough).

Let $d = 3$. In this case $S$ is a cubic surface in $\mathbb{P}^3_K$. From the classification (see for example~\cite[Theorem 9.5.6]{D}) it can be seen that the group $\Aut(S)$ is either isomorphic to~$(\mathbb{Z}/3\mathbb{Z})^3\rtimes\mathfrak{S}_4$ (Fermat cubic case), or has at most $120$ elements. In any case, we have the inequality $J(\Aut(S)) \leqslant 120$.


Let $d = 1$. In this case there is a rational point on $S$ that is invariant with respect to all automorphisms, namely, the intersection point of all divisors from the complete linear system $|-K_S|$. Hence, by Corollary~\ref{lemma: action with fixed point} we have the inequality $J(\Aut(S)) \leqslant 60$. \end{proof}

After we have received the estimates, it is convenient to note the following.

\begin{lemma}\label{lemma: dP 2}
Let $K$ be a field of characteristic $0$. Let $S$ be a del Pezzo surface of degree~$2$ over $K$. Then either $J(\Aut(S)) \leqslant J(\PGL_3(K))$, or $J(\Aut(S)) \leqslant 96.$
\end{lemma}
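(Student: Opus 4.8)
The plan is to use the standard structure theory of del Pezzo surfaces of degree $2$, exploiting the anticanonical double cover. Recall that for a del Pezzo surface $S$ of degree $2$ the linear system $|-K_S|$ defines a degree $2$ morphism $\phi\colon S \to \mathbb{P}^2_K$ branched over a smooth quartic curve $C$, and this morphism is canonical, hence $\Aut(S)$-equivariant. Thus there is a natural homomorphism $\rho\colon \Aut(S) \to \PGL_3(K)$, whose image lands in the subgroup preserving the quartic $C$, and whose kernel is generated by the Geiser involution $\gamma$, so $\ker\rho \simeq \mathbb{Z}/2\mathbb{Z}$ and this kernel is central in $\Aut(S)$.

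The first step is to fix a finite subgroup $G \subset \Aut(S)$ and split into two cases according to whether the Geiser involution $\gamma$ lies in $G$ or not. If $\gamma \notin G$, then $\rho|_G$ is injective, so $G$ embeds into $\PGL_3(K)$ and hence $J(G) \leqslant J(\PGL_3(K))$; this handles the first alternative in the statement. If $\gamma \in G$, then $G$ sits in a central extension $1 \to \mathbb{Z}/2\mathbb{Z} \to G \to \overline{G} \to 1$ with $\overline{G} = \rho(G) \subset \PGL_3(K)$. The second step is to bound $J(\overline{G})$: since $\overline{G}$ preserves a smooth plane quartic, it cannot be too large — in characteristic zero the automorphism group of a smooth plane quartic has order at most $168$ (Hurwitz/Klein), and more to the point one can classify the possible $\overline{G}$. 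The key observation I would use is that $\overline{G}$ acting on the quartic, hence on a curve of genus $3$, has $|\overline{G}| \leqslant 168$, with the extremal case being the Klein quartic with $\overline{G} \simeq \PSL_2(\mathbb{F}_7)$; but a central $\mathbb{Z}/2\mathbb{Z}$-extension of $\PSL_2(\mathbb{F}_7)$ has no abelian subgroup of small index, so one must check this case (and the few other large cases) directly, or simply observe $|G| \leqslant 2 \cdot 48 = 96$ whenever $\overline G$ is a subgroup of the automorphism group of a quartic other than the Klein one, and dispose of the Klein quartic case separately by noting that over $K$ with $\PSL_2(\mathbb{F}_7) \subset \Aut(C)$ one still gets $J(G) \leqslant 96$ by a direct check on the relevant central extension, or because the relevant extension does not embed in $\Aut(S)$ over $K$.

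More precisely, the cleanest route for the second case: use that $\Aut(\overline S)$ for a degree $2$ del Pezzo surface is, by the classification (e.g. Dolgachev's book, Table in the del Pezzo degree $2$ section), either $\mathbb{Z}/2\mathbb{Z}$ times a subgroup of $\mathfrak{S}_4$-type groups, or one of a short explicit list; in all cases other than the Klein-quartic-related one, $|\Aut(\overline S)| \leqslant 96$, which already gives $J(\Aut(S)) \leqslant |\Aut(S)| \leqslant |\Aut(\overline S)| \leqslant 96$. For the Klein quartic case, $\Aut(\overline S) \simeq \mathbb{Z}/2\mathbb{Z} \times \PSL_2(\mathbb{F}_7)$, and one checks $J(\mathbb{Z}/2\mathbb{Z} \times \PSL_2(\mathbb{F}_7)) = J(\PSL_2(\mathbb{F}_7)) = 168 \leqslant J(\PGL_3(K))$ \emph{provided} $\PSL_2(\mathbb{F}_7)$ actually embeds in $\PGL_3(K)$ over $K$ — but an embedding $\PSL_2(\mathbb{F}_7) \hookrightarrow \Aut(S) \to \PGL_3(K)$ exists exactly when the Klein quartic is defined over $K$, which forces $\sqrt{-7} \in K$ and hence $J(\PGL_3(K)) \geqslant 168$ by Theorem~\ref{theo: J(GL)}; so in that subcase $J(\Aut(S)) \leqslant J(\PGL_3(K))$ too, and otherwise $J(\Aut(S)) \leqslant 96$.

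I expect the main obstacle to be pinning down exactly which finite groups can occur as $\rho(G) \subset \PGL_3(K)$ when $G$ is a finite subgroup of $\Aut(S)$ containing the Geiser involution, i.e. controlling the possible central $\mathbb{Z}/2\mathbb{Z}$-extensions and making sure the $96$ bound is genuinely attained only in cases separate from the Klein quartic; this requires either invoking the explicit classification of automorphism groups of degree $2$ del Pezzo surfaces over $\overline K$ and comparing orders, or a careful hands-on analysis of the extension $1 \to \mathbb{Z}/2\mathbb{Z} \to G \to \overline G \to 1$ together with Theorem~\ref{theo: weak jordan constant} to upgrade an abelian subgroup of $\overline G$ to a characteristic abelian subgroup of $G$. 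The arithmetic input — that the Klein-quartic case lives over $K$ only when $\sqrt{-7}\in K$ — is what ties the exceptional case back to $J(\PGL_3(K))$ and makes the dichotomy in the statement clean.
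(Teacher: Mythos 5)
Your proposal follows essentially the same route as the paper: the anticanonical double cover $S\to\mathbb{P}^2_K$ branched over a smooth quartic $C$, the central extension $0\to\mathbb{Z}/2\mathbb{Z}\to\Aut(S)\to\Aut(C)\to 1$, the classification of automorphism groups of plane quartics with the Klein quartic as the only case of order $>96$, and, in the Klein case, the observation that $\Aut(C)\simeq\mathrm{PSL}_2(\mathbb{F}_7)$ embeds into $\PGL_3(K)$ (via the canonical embedding of $C$), which forces $J(\PGL_3(K))\geqslant 168\geqslant J(\Aut(S))$. Two small corrections. First, your intermediate claim that $|\Aut(\overline S)|\leqslant 96$ (equivalently $|G|\leqslant 2\cdot 48=96$) in all non-Klein cases is false: the Fermat quartic has automorphism group of order $96$, so the corresponding degree $2$ del Pezzo surface has $|\Aut(\overline S)|=192$. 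The bound $96$ in the lemma is not a bound on $|G|$ but on the index of a normal abelian subgroup: since the Geiser involution generates a central (hence normal abelian) subgroup, $J(G)\leqslant[G:G\cap\langle\gamma\rangle]\leqslant|\Aut(C)|\leqslant 96$ whenever $\Aut(\overline C)\not\simeq\mathrm{PSL}_2(\mathbb{F}_7)$ --- this is exactly the paper's argument and is already available from your setup, so the slip is harmless but should be repaired. Second, your detour through ``the Klein quartic is defined over $K$ only if $\sqrt{-7}\in K$'' is unnecessary (and would itself require proof): once $\mathrm{PSL}_2(\mathbb{F}_7)\subset\PGL_3(K)$, simplicity gives $J(\PGL_3(K))\geqslant J(\mathrm{PSL}_2(\mathbb{F}_7))=168$ directly, which is how the paper concludes. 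Your case split on whether the Geiser involution lies in $G$ is a fine alternative organization but is not needed, since the uniform bound $J(G)\leqslant|\Aut(C)|$ covers both cases at once.
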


\begin{proof}
The anticanonical linear system defines a double-covering structure over $\mathbb{P}^2_K$ on~$S$ with branching over a smooth quartic $C$. Therefore, the automorphism group of the surface $S$ is included in the following exact sequence $$0\xrightarrow{}{}\mathbb{Z}/2\mathbb{Z}\xrightarrow{}{}\Aut(S)\xrightarrow{}{}\Aut(C)\xrightarrow{}{} 1.$$ From this sequence we immediately obtain an estimation $J(\Aut(S)) \leqslant |\Aut(C)|$. Automorphism groups of smooth flat quartics over $\overline{K}$ are described in~\cite[Theorem 6.5.2]{D}. It follows from this theorem that if the automorphism group $\Aut(\overline{C})$ is not isomorphic to~$\text{PSL}_2(\mathbb{F}_7)$, then it has no more than $96$ elements. But the group $\Aut(C)$ is a subgroup in~$\Aut(\overline{C})$, so if $|\Aut(\overline{C})| \leqslant 96$, then $|\Aut(C)| \leqslant 96$, and in this case the lemma is proved.

Suppose $$\Aut(\overline{C}) \simeq\text{PSL}_2(\mathbb{F}_7),$$ in particular $|\Aut(\overline{C})| = 168$. If~$\Aut(C)\subset\Aut(\overline{C})$ is a proper subgroup, then we have the inequality $|\Aut(C)|\leqslant 56$, and in this case the lemma is also proved. So we can assume that $\Aut(C) \simeq\text{PSL}_2(\mathbb{F}_7)$. Since the embedding of the curve $C$ into the projective plane is canonical, the group~$\Aut(C)$ is a subgroup in~$\PGL_3(K)$. Since the group $\text{PSL}_2(\mathbb{F}_7)$ is simple, then $$J(\PGL_3(K))\geqslant J(\text{PSL}_2(\mathbb{F}_7)) =|\text{PSL}_2(\mathbb{F}_7)|= |\Aut(C)|\geqslant J(\Aut(S)),$$ and in this case the lemma is also proved.
\end{proof}

The following proposition restricts the Jordan constant of automorphism groups of rational quadrics, which are different from the product of two projective lines.

\begin{propos}[{\cite[Proposition 1.6]{Zai}}]\label{prop: J(AutS) <= 120 if rkPic = 1}
    Let $K$ be a field of characteristic $0$. Let $S$ be a smooth rational quadric in $\mathbb{P}^3_K$, and $S\not\simeq\mathbb{P}^1_K\times\mathbb{P}^1_K$. Then $$J(\Aut(S)) \leqslant 120.$$
\end{propos}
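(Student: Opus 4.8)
The plan is to reduce to the structure theory of smooth quadric surfaces in $\mathbb{P}^3_K$ and then to the action on an explicit geometric object with a fixed point. A smooth quadric $S\subset\mathbb{P}^3_K$ is a del Pezzo surface of degree $8$, and since $S\not\simeq\mathbb{P}^1_K\times\mathbb{P}^1_K$ over $K$, the Picard rank of $S$ is $1$ (rather than $2$): over $\overline K$ the quadric becomes $\mathbb{P}^1\times\mathbb{P}^1$, and $\rk\operatorname{Pic}(S)=1$ is exactly the condition that the Galois action swaps the two rulings, equivalently that there is no $K$-rational ruling. There are two cases to distinguish according to whether $S$ has a $K$-point.

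First I would treat the case when $S(K)=\varnothing$ (this includes the anisotropic conics situation). Then $S$ carries no $K$-rational curve of low degree that an automorphism could fix, so one must work birationally; but the cleanest route is to pass to $\overline K$ and use $\Aut(\overline S)\simeq\Aut(\mathbb{P}^1\times\mathbb{P}^1)\simeq(\PGL_2(\overline K)\times\PGL_2(\overline K))\rtimes\mathbb{Z}/2\mathbb{Z}$, and to observe that $\Aut(S)$ sits inside the subgroup of this that commutes with the Galois action. Since the Galois action swaps the factors, the "diagonal-type" subgroup that survives is (a subgroup of) $\PGL_2\big(\overline K\big)\rtimes\mathbb{Z}/2\mathbb{Z}$ — concretely, an automorphism defined over $K$ is determined by a single $\PGL_2$-element together with the transposition, acting as $g$ on one ruling and its Galois conjugate on the other. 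By Proposition~\ref{prop:Beauville} the image in $\PGL_2$ is cyclic, dihedral, $\mathfrak A_4$, $\mathfrak S_4$ or $\mathfrak A_5$, so $\Aut(S)$ is, up to an index-$2$ subgroup, a subgroup of $\PGL_2(\overline K)$, and one gets $J(\Aut(S))\leqslant 2\cdot J(\PGL_2(\overline K))$. Since $\mathfrak A_5\subset\PGL_2(\overline K)$, this bound is $2\cdot 60=120$. (In fact one should double-check whether the $\mathbb{Z}/2$ can be absorbed — e.g. whether the $\mathfrak A_5$ case genuinely allows the extra factor — but $120$ suffices regardless.)

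Alternatively, and perhaps more uniformly, I would use that $S_{\overline K}\simeq\mathbb{P}^1\times\mathbb{P}^1$ with the Galois action interchanging the rulings, which forces the existence of a Galois-invariant, hence $K$-rational, structure: the diagonal-type curve class. Either $S$ has a $K$-point $P$ (then $\Aut(S)$ need not fix it, but one can use the blow-up of the two rulings through $P$, or simply invoke that a quadric with a point is rational with a pencil of conics), or one produces an $\Aut(S)$-invariant $\overline K$-point as the base locus of some canonically defined anticanonical-type subsystem. If an $\Aut(S)$-fixed $\overline K$-point exists, Corollary~\ref{lemma: action with fixed point} gives $J(\Aut(S))\leqslant 60\leqslant 120$ immediately.

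The main obstacle I anticipate is the bookkeeping of the Galois-twisted form of $\Aut(\mathbb{P}^1\times\mathbb{P}^1)$: one must argue carefully that, because $\rk\operatorname{Pic}(S)=1$, no automorphism of $S$ preserves a ruling individually, so the projection of $\Aut(S)$ to the "first factor" of $\PGL_2(\overline K)\times\PGL_2(\overline K)$ is injective on an index-$\leqslant 2$ subgroup — this is the step that converts a two-dimensional-looking automorphism group into an essentially one-dimensional one and thereby brings the constant down from $7200$ to $120$. Once that reduction is in place, the numerical bound $J(\Aut(S))\leqslant 2\,J(\PGL_2(\overline K))\leqslant 120$ follows from Lemma~\ref{lemma:J(Gl_2 <= 60)} (applied to $\GL_2(\overline K)$, which surjects onto $\PGL_2(\overline K)$), together with Theorem~\ref{theo: weak jordan constant} to clean up the index-$2$ extension if necessary.
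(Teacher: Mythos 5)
The paper does not actually prove this proposition: it is quoted verbatim from \cite[Proposition 1.6]{Zai}, so the only comparison available is with the strategy one expects in that reference. Your geometric reduction is the right one and is surely the intended argument: since $S$ is rational it has a $K$-point by Lang--Nishimura, so the hypothesis $S\not\simeq\mathbb{P}^1_K\times\mathbb{P}^1_K$ forces the Galois group to interchange the two rulings of $\overline{S}\simeq\mathbb{P}^1\times\mathbb{P}^1$; hence the subgroup $\Aut(S)^0\subset\Aut(S)$ of automorphisms preserving each ruling has index at most $2$ and projects injectively to a single factor $\PGL_2(\overline{K})$, because for a Galois-invariant pair $(g_1,g_2)$ the swap relation determines $g_2$ by $g_1$. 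Two of your side remarks should be discarded, though: the case $S(K)=\varnothing$ is vacuous here (rationality gives a $K$-point), and the ``alternative'' route via an $\Aut(S)$-invariant $\overline{K}$-point cannot work, since a quadric has no canonically distinguished point and $\Aut(S)$ need not fix any.

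The genuine gap is in the final numerical step. The inequality $J(G)\leqslant [G:N]\cdot J(N)$ for a normal subgroup $N$ is false in general, because the abelian subgroup of $N$ realizing $J(N)$ need not be normal in $G$; and your proposed repair via Theorem~\ref{theo: weak jordan constant} is numerically useless here: applied to an abelian subgroup of index $60$ in $N=G\cap\Aut(S)^0$ it only produces a characteristic abelian subgroup of index at most $3600$, giving $J(G)\leqslant 7200$ rather than $120$. What actually closes the argument is the observation (used repeatedly in Section~\ref{sect: conic bundles} of the paper) that every finite subgroup of $\PGL_2(\overline{K})$ --- cyclic, $D_{2n}$, $\mathfrak{A}_4$, $\mathfrak{S}_4$, $\mathfrak{A}_5$ --- contains a \emph{characteristic} abelian subgroup of index at most $60$: respectively the whole group, the rotation subgroup (or all of $D_4$), the Klein four-subgroup, the Klein four-subgroup, and the trivial subgroup. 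A characteristic subgroup of the normal subgroup $N$ is normal in $G$, so every finite $G\subset\Aut(S)$ has a normal abelian subgroup of index at most $2\cdot 60=120$. With that substitution your proof is complete.
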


\begin{sled}\label{sled: dP 8}
Let $K$ be a field of characteristic $0$, and let $S$ be a rational del Pezzo surface of degree $8$ over $K$. Assume that $S\not\simeq\mathbb{P}^1_K\times\mathbb{P}^1_K$. Then $$J(\Aut(S)) \leqslant 120.$$
\end{sled}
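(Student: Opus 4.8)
The plan is to reduce the degree-$8$ case $S\not\simeq\mathbb{P}^1_K\times\mathbb{P}^1_K$ to situations already treated. A rational del Pezzo surface of degree $8$ over $K$ is, up to isomorphism, one of: $\mathbb{P}^1_K\times\mathbb{P}^1_K$, the Hirzebruch surface $\mathbb{F}_1$ (the blow-up of $\mathbb{P}^2_K$ at a $K$-point), or a smooth quadric surface $S\subset\mathbb{P}^3_K$ that is not a product of two lines (these are the forms of $\mathbb{P}^1\times\mathbb{P}^1$ which become a product only after a quadratic extension). Since the hypothesis excludes $\mathbb{P}^1_K\times\mathbb{P}^1_K$, I would split into these two remaining cases.

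First, suppose $S\simeq\mathbb{F}_1$. Then $S$ has a unique $(-1)$-curve, which is therefore defined over $K$ and invariant under every automorphism. Contracting it gives an $\Aut(S)$-equivariant birational morphism to a smooth surface $S'$ (in fact $\mathbb{P}^2_K$), and the image of the contracted curve is an $\Aut(S)$-fixed $K$-point of $S'$. Hence the action of $\Aut(S)$ on $S'$ is faithful with a fixed $\overline{K}$-point, and Corollary~\ref{lemma: action with fixed point} gives $J(\Aut(S))\leqslant 60\leqslant 120$. This is exactly the argument already used for degree $7$ in Lemma~\ref{lemma: dP cases; not P1xP1}.

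Second, suppose $S$ is a smooth quadric in $\mathbb{P}^3_K$ with $S\not\simeq\mathbb{P}^1_K\times\mathbb{P}^1_K$. This is precisely the hypothesis of Proposition~\ref{prop: J(AutS) <= 120 if rkPic = 1}, which yields $J(\Aut(S))\leqslant 120$ directly. Combining the two cases proves the corollary.

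The only real content is the classification step: knowing that a rational del Pezzo surface of degree $8$ over $K$ which is neither $\mathbb{P}^1_K\times\mathbb{P}^1_K$ nor a nonsplit quadric in $\mathbb{P}^3_K$ must be $\mathbb{F}_1$. Over $\overline{K}$ a del Pezzo surface of degree $8$ is either $\mathbb{P}^1\times\mathbb{P}^1$ or $\mathbb{F}_1$, and $\mathbb{F}_1$ has no nontrivial forms because its unique $(-1)$-curve is canonical, pinning down the blow-down structure over $K$; the forms of $\mathbb{P}^1\times\mathbb{P}^1$ are exactly the smooth quadrics in $\mathbb{P}^3_K$ (anticanonically embedded), split iff isomorphic to $\mathbb{P}^1_K\times\mathbb{P}^1_K$. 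I expect this bookkeeping with forms to be the main (though still routine) obstacle; everything after it is an immediate appeal to Corollary~\ref{lemma: action with fixed point} and Proposition~\ref{prop: J(AutS) <= 120 if rkPic = 1}.
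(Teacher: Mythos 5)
Your proof is correct and follows essentially the same route as the paper: the same dichotomy over $\overline{K}$ into the blow-up of $\mathbb{P}^2$ at a point (handled by contracting the unique, hence Galois-invariant, $(-1)$-curve and applying Corollary~\ref{lemma: action with fixed point}) versus a form of $\mathbb{P}^1\times\mathbb{P}^1$ (handled by Proposition~\ref{prop: J(AutS) <= 120 if rkPic = 1}, using rationality to realize $S$ as a smooth quadric in $\mathbb{P}^3_K$, for which the paper cites \cite[Proposition 6.1]{Zai}). One small correction: the embedding of such a quadric in $\mathbb{P}^3_K$ is by half the anticanonical class, not the anticanonical class, and it is the \emph{rationality} of $S$ (not merely being a form of $\mathbb{P}^1\times\mathbb{P}^1$) that guarantees this class is represented over $K$.
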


\begin{proof} 
It is known that $\overline{S}$ is isomorphic to either $\mathbb{P}_{\overline{K}}^1\times\mathbb{P}_{\overline{K}}^1$, or to the blow-up of $\mathbb{P}^2_{\overline{K}}$ at a point. If $\overline{S}$ is isomorphic to $\mathbb{P}_{\overline{K}}^1\times\mathbb{P}_{\overline{K}}^1$, then since $S$ is rational, it is isomorphic to a smooth rational quadric in~$\mathbb{P}^3_K$ (see for example~\cite[Proposition 6.1]{Zai}). According to Proposition~\ref{prop: J(AutS) <= 120 if rkPic = 1} we have $J(\Aut(S)) \leqslant 120$.

If $\overline{S}$ is isomorphic to the blow-up of $\mathbb{P}^2_{\overline{K}}$ at a point, then there is a unique $(-1)$-curve on~$S$ defined over $K$ and invariant with respect to all automorphisms. Therefore, there exists an $\Aut(S)$-equivariant contraction to the projective plane $\mathbb{P}_K^2$. Thus, the group $\Aut(S)$ effectively acts on $\mathbb{P}_K^2$ with a fixed point (the image of the contractible $(-1)$-curve), and by Corollary~\ref{lemma: action with fixed point} we have ~$J(\Aut(S)) \leqslant 60$.\end{proof}

For the quadric $\mathbb{P}^1_K\times\mathbb{P}^1_K$, there is the following result.

\begin{theorem}[{\cite[Theorem 1.4]{Zai}}]\label{theo:theo P^1 x P^1}
Let $K$ be a field of characteristic $0$.
\begin{enumerate}
    \item $J(\Aut(\mathbb{P}^1_K \times \mathbb{P}^1_K)) = 7200$ if and only if \mbox{$\sqrt{5} \in K$} and $-1$ is a sum of two squares in $K$;
    \item $J(\Aut(\mathbb{P}^1_K \times \mathbb{P}^1_K)) = 72$ if and only if \mbox{$\sqrt{5} \not\in K$}, and $-1$ is a sum of two squares in~$K$;
    \item $J(\Aut(\mathbb{P}^1_K \times \mathbb{P}^1_K)) = 8$ if and only if $-1$ is not a sum of two squares in $K$.

\end{enumerate}
\end{theorem}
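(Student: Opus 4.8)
The plan is to translate the statement into pure group theory. A smooth surface $K$-isomorphic to $\mathbb{P}^1_K\times\mathbb{P}^1_K$ has Picard group $\mathbb{Z}^2$ spanned by the two ruling classes, so every automorphism either preserves or interchanges the two projections to $\mathbb{P}^1_K$, whence
$$\Aut(\mathbb{P}^1_K\times\mathbb{P}^1_K)\simeq\bigl(\PGL_2(K)\times\PGL_2(K)\bigr)\rtimes\mathbb{Z}/2\mathbb{Z},$$
the $\mathbb{Z}/2\mathbb{Z}$ swapping the factors. It then suffices to bound $J(G)$ for finite subgroups $G$ of the right-hand side. I would set $G^{+}=G\cap(\PGL_2(K)\times\PGL_2(K))$, of index $1$ or $2$, let $H_1,H_2\subset\PGL_2(K)$ be the two projections of $G^{+}$, and record that by Proposition~\ref{prop:Beauville} each $H_i$ is cyclic, dihedral, $\mathfrak{A}_4$, $\mathfrak{S}_4$ or $\mathfrak{A}_5$, and that each group on this list has a \emph{unique} minimal-index characteristic abelian subgroup $A_i$ (trivial for $\mathfrak{A}_5$, the Klein four-subgroup for $\mathfrak{A}_4$ and $\mathfrak{S}_4$, the index-$2$ cyclic subgroup for $D_{2n}$, $n\geqslant3$), with $[H_i:A_i]\leqslant 60$; moreover $[H_i:A_i]\leqslant 6$ unless $H_i\simeq\mathfrak{A}_5$, and $[H_i:A_i]\leqslant 2$ unless $H_i$ contains $\mathfrak{A}_4$.

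The second step is the uniform upper bound. The subgroup $A_1\times A_2$ is characteristic, hence normal, in $H_1\times H_2$; when $[G:G^{+}]=2$ a swapping element conjugates $H_1$ onto $H_2$ and, by uniqueness of the $A_i$, $A_1$ onto $A_2$, so $A_1\times A_2$ is normalized by all of $G$. Therefore $G\cap(A_1\times A_2)$ is a normal abelian subgroup of $G$ with
$$[G:G\cap(A_1\times A_2)]\leqslant[G:G^{+}]\,[H_1:A_1]\,[H_2:A_2]\leqslant 2\cdot 60\cdot 60=7200.$$
The same estimate with $60$ replaced by $6$ gives $J(\Aut(\mathbb{P}^1_K\times\mathbb{P}^1_K))\leqslant 72$ whenever $\PGL_2(K)$ contains no copy of $\mathfrak{A}_5$, and with $60$ replaced by $2$ gives $\leqslant 8$ whenever it contains no copy of $\mathfrak{A}_4$; by the classification of finite subgroups of $\PGL_2(K)$ (Proposition~\ref{prop:Beauville} together with the refinement \cite[Proposition~1.1]{Bea}) these are exactly the fields with $\sqrt5\notin K$ or $-1$ not a sum of two squares, and the fields with $-1$ not a sum of two squares, respectively.

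Thirdly I would exhibit subgroups attaining the bounds. For a finite group $Q$ with a unique minimal-index characteristic abelian subgroup $A$, the same coset analysis as above shows that every abelian normal subgroup of the wreath product $(Q\times Q)\rtimes\mathbb{Z}/2\mathbb{Z}$ lies in $A\times A$, so its Jordan constant equals $2[Q:A]^2$; hence $J((\mathfrak{A}_5\times\mathfrak{A}_5)\rtimes\mathbb{Z}/2\mathbb{Z})=7200$, $J((\mathfrak{S}_4\times\mathfrak{S}_4)\rtimes\mathbb{Z}/2\mathbb{Z})=72$, and $J((D_{2n}\times D_{2n})\rtimes\mathbb{Z}/2\mathbb{Z})=8$ for $n\geqslant 3$. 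If $\sqrt5\in K$ and $-1$ is a sum of two squares, then $\mathfrak{A}_5\subset\PGL_2(K)$, so the first wreath product embeds into $\Aut(\mathbb{P}^1_K\times\mathbb{P}^1_K)$ and $J=7200$; if $-1$ is a sum of two squares but $\sqrt5\notin K$, then $\mathfrak{S}_4\subset\PGL_2(K)$ by \cite[Proposition~1.1]{Bea}, giving $J=72$; and $\mathfrak{S}_3\simeq D_6$ always embeds in $\PGL_2(K)$ (as the stabilizer of $\{0,1,\infty\}$), so the third wreath product always gives $J\geqslant 8$, hence $J=8$ when $-1$ is not a sum of two squares. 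Since these three conditions partition all fields of characteristic $0$, the three equivalences follow.

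The step I expect to be the main obstacle is the input ``$\mathfrak{S}_4\subset\PGL_2(K)$ if and only if $-1$ is a sum of two squares in $K$'': this is precisely what makes the value $72$ realizable in, and only in, case (2), but it is stronger than the portion of Proposition~\ref{prop:Beauville} reproduced in the excerpt (which pins down only $\mathfrak{A}_5$), so one genuinely needs the full classification of \cite[Proposition~1.1]{Bea}. A secondary, purely bookkeeping difficulty is carrying $A_1\times A_2$ through the index-$2$ extension while keeping it normal and of index at most $7200$ — this is why the uniqueness of the minimal characteristic abelian subgroup of each finite subgroup of $\PGL_2(K)$ must be verified type by type, rather than merely invoking Theorem~\ref{theo: weak jordan constant}, which would lose a square and ruin the bound.
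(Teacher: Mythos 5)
This statement is not proved in the paper at all: it is imported verbatim as \cite[Theorem~1.4]{Zai}, so there is no internal proof to compare against. Judged on its own, your argument is essentially correct and is surely close in spirit to the proof in \cite{Zai}: the decomposition $\Aut(\mathbb{P}^1_K\times\mathbb{P}^1_K)\simeq(\PGL_2(K)\times\PGL_2(K))\rtimes\mathbb{Z}/2\mathbb{Z}$, the reduction to a characteristic abelian subgroup $A_1\times A_2$ transported by the swap, and the use of the full Beauville classification (in particular ``$\mathfrak{S}_4\subset\PGL_2(K)$ iff $-1$ is a sum of two squares'', which is indeed needed beyond the fragment of Proposition~\ref{prop:Beauville} reproduced here) are exactly the right ingredients, and you correctly identify that invoking Theorem~\ref{theo: weak jordan constant} instead would square the index and destroy the bounds.

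Two points in your write-up need patching. First, the ``unique minimal-index characteristic abelian subgroup'' claim is false as stated for dihedral groups: $D_8$ has three abelian subgroups of index $2$. What you actually need, and what is true, is a canonically (hence isomorphism-invariantly) defined characteristic abelian subgroup of index at most $2$ in each $H_i$ — for $D_{2n}$ with $n\geqslant 3$ the cyclic subgroup $\mathbb{Z}/n\mathbb{Z}$ is characteristic even when not the unique abelian index-$2$ subgroup, and this is enough for the swap to carry $A_1$ to $A_2$. Second, the sharp lower bounds are not delivered by ``the same coset analysis as above'': that analysis only bounds the index of one particular normal abelian subgroup, whereas computing $J\bigl((Q\times Q)\rtimes\mathbb{Z}/2\mathbb{Z}\bigr)=2[Q:A]^2$ requires showing that \emph{no larger} normal abelian subgroup exists. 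For this one must check that any normal abelian $N\subset Q\times Q$ satisfies $\pi_i(N)\subseteq A$ (using $[Q,\pi_i(N)]\times 1\subseteq N$ together with commutativity of $N$), and that $N$ cannot be enlarged by a swapping element; this is true for $Q=\mathfrak{A}_5,\ \mathfrak{S}_4,\ \mathfrak{S}_3$ but is a group-by-group verification, not a formal consequence of the existence of $A$. With these two repairs the proof is complete.
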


For convenience, we introduce the following notation. Let $K$ be a field of characteristic~$0$. Denote by $J_{dP}(K)$ the following value: \begin{equation}\label{eq Jdp}
J_{dP}(K) = \max\limits_{X}\bigl(J(\Aut(X))\bigr),
\end{equation}
where $X$ runs through all rational del Pezzo surfaces over $K$. For this value, there is an obvious lower bound.

\begin{rem}\label{rem: max dP >= 120}
    There exists a (rational) del Pezzo surface of degree $5$ with an automorphism group isomorphic to $\mathfrak{S}_5$ over any field, see for example~\mbox{\cite[Example 3.1, Corollary 8.3]{Zai 2}}. Therefore, we have $J_{dP}(K)\geqslant 120.$
\end{rem}

Now let us gather all the statements of this section into a proposition that will be convenient to refer to.

\begin{propos}\label{prop: all Dp}
Let $K$ be a field of characteristic $0$. Then
\begin{enumerate}
        \item $J_{dP}(K) = 7200$ if and only if $\sqrt{5}\in K$, and $-1$ is a sum of two squares in $K$;
        \item $J_{dP}(K) = 168$ if and only if $\sqrt{5} \not\in K$ and $\sqrt{-7}\in K$;
        \item $J_{dP}(K) = 120$ if and only if one of the two conditions is satisfied:
        \begin{itemize}
            \item$\sqrt{5}\in K$, and $-1$ is not a sum of two squares in $K$; 
            \item $\sqrt{5} \not\in K$ and $\sqrt{-7} \not\in K$.
            \end{itemize}
    \end{enumerate}
\end{propos}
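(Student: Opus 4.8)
The plan is to assemble Proposition~\ref{prop: all Dp} by combining the degree-by-degree estimates already proved in this section with the known computation of $J(\PGL_3(K))$ from Theorem~\ref{theo: J(GL)} and of $J(\Aut(\mathbb{P}^1_K\times\mathbb{P}^1_K))$ from Theorem~\ref{theo:theo P^1 x P^1}. Recall that $J_{dP}(K)$ is the maximum of $J(\Aut(X))$ over rational del Pezzo surfaces $X$ over $K$, and that such $X$ has degree $d$ between $1$ and $9$. For $d\neq 2,8,9$ we have $J(\Aut(X))\leqslant 120$ by Lemma~\ref{lemma: dP cases; not P1xP1}; for $d=9$, $\Aut(X)=\PGL_3(K)$; for $d=8$ either $X\simeq\mathbb{P}^1_K\times\mathbb{P}^1_K$ (governed by Theorem~\ref{theo:theo P^1 x P^1}) or $J(\Aut(X))\leqslant 120$ by Corollary~\ref{sled: dP 8}; and for $d=2$ Lemma~\ref{lemma: dP 2} gives $J(\Aut(X))\leqslant\max\{J(\PGL_3(K)),96\}$. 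Combining, and using Remark~\ref{rem: max dP >= 120} which gives the unconditional lower bound $J_{dP}(K)\geqslant 120$, I get the clean formula
\begin{equation*}
J_{dP}(K)=\max\bigl\{120,\ J(\PGL_3(K)),\ J(\Aut(\mathbb{P}^1_K\times\mathbb{P}^1_K))\bigr\}.
\end{equation*}
So the whole proposition reduces to reading off this maximum under each of the arithmetic conditions on $K$.

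Next I would run through the three cases. Suppose first $\sqrt{5}\in K$ and $-1$ is a sum of two squares in $K$. Then Theorem~\ref{theo:theo P^1 x P^1}(1) gives $J(\Aut(\mathbb{P}^1_K\times\mathbb{P}^1_K))=7200$, so $J_{dP}(K)\geqslant 7200$; since all the other contributions to $J_{dP}(K)$ are at most $7200$ (namely $120$, and $J(\PGL_3(K))\leqslant 360\leqslant 7200$ by Theorem~\ref{theo: J(GL)}, and the quadric contributes exactly $7200$), we get equality. Conversely, if $J_{dP}(K)=7200$, the only term in the maximum that can reach $7200$ is $J(\Aut(\mathbb{P}^1_K\times\mathbb{P}^1_K))$ — since $J(\PGL_3(K))\leqslant 360$ and the remaining del Pezzo surfaces give at most $120$ — so by Theorem~\ref{theo:theo P^1 x P^1} we must be in its case (1), i.e. $\sqrt{5}\in K$ and $-1$ is a sum of two squares. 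This gives statement (1).

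For statement (2), assume $\sqrt{5}\notin K$ and $\sqrt{-7}\in K$. Then $\PGL_2(K)$ does not contain $\mathfrak{A}_5$ (Proposition~\ref{prop:Beauville}), so in particular $\sqrt 5\notin K$ forces, via Theorem~\ref{theo: J(GL)}, that $J(\PGL_3(K))=168$ (case (2) of that theorem applies since $\sqrt{-7}\in K$ and $\sqrt5\notin K$). Also $\sqrt5\notin K$ means Theorem~\ref{theo:theo P^1 x P^1} can only give case (2) or (3), so $J(\Aut(\mathbb{P}^1_K\times\mathbb{P}^1_K))\leqslant 72$. Hence $J_{dP}(K)=\max\{120,168,\leqslant 72\}=168$. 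For the converse, if $J_{dP}(K)=168$, then since the quadric contributes at most $72$ whenever $\sqrt5\notin K$ and exactly $7200$ (not $168$) when $\sqrt5\in K$, and the surfaces of degree $\neq 2,8,9$ contribute at most $120$, the value $168$ must come from $d=9$ or $d=2$, i.e.\ from $J(\PGL_3(K))=168$; by Theorem~\ref{theo: J(GL)} this forces $\sqrt{-7}\in K$ and ($\omega\notin K$ or $\sqrt5\notin K$). If $\sqrt5\in K$ we'd have either $J_{dP}(K)\geqslant 7200$ (if $-1$ is a sum of two squares) contradicting $168$, or, if $-1$ is not such a sum, then $\omega\notin K$ (since $\omega^2+(\omega^2)^2=-1$ by Remark~\ref{rem: -1 = a^2 + b^2 in special fields }) and $\sqrt5\in K$ gives $J(\PGL_3(K))\leqslant 60$ by Theorem~\ref{theo: J(GL)}(3), again contradicting $168$. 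So $\sqrt5\notin K$, which together with $\sqrt{-7}\in K$ is exactly condition (2). Finally, statement (3) follows from (1), (2) and the lower bound $J_{dP}(K)\geqslant 120$ by exclusion: if neither of the conditions in (1) nor (2) holds, then $J(\PGL_3(K))\leqslant 120$ (Theorem~\ref{theo: J(GL)}(3), using that $\sqrt{-7}\notin K$, or $\sqrt5\notin K$ together with $\sqrt{-7}\notin K$; one checks the two sub-cases of (3) both land in case (3) of Theorem~\ref{theo: J(GL)}) and $J(\Aut(\mathbb{P}^1_K\times\mathbb{P}^1_K))\leqslant 120$ (it is $72$ or $8$ since we are never in case (1) of Theorem~\ref{theo:theo P^1 x P^1} here), so $J_{dP}(K)=120$; and I should double-check that the two listed sub-cases of (3) are exactly the complement of the conditions in (1) and (2), which is a routine Boolean verification.

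The main obstacle I anticipate is not any single deep step — all the hard geometry and group theory is already packaged in the cited results — but rather the careful bookkeeping in the converse directions: one must make sure that in each case the value $J_{dP}(K)$ genuinely cannot be achieved by a surface other than the one claimed, and that the arithmetic conditions partition correctly. In particular the $d=2$ case needs attention, since Lemma~\ref{lemma: dP 2} only bounds $J(\Aut(S))$ by $\max\{J(\PGL_3(K)),96\}$; I must confirm $96< 120$ so that degree-$2$ surfaces never push $J_{dP}(K)$ above what $\PGL_3(K)$ and the quadric already give. The other subtlety is correctly invoking Theorem~\ref{theo: J(GL)}(3), which only asserts $J(\PGL_3(K))\leqslant 60$ (not an equality), so the lower bound $J_{dP}(K)\geqslant 120$ from Remark~\ref{rem: max dP >= 120} is essential to pin down the value $120$ exactly in case (3).
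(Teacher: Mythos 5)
Your proof is correct and takes essentially the same route as the paper: reduce to the identity $J_{dP}(K)=\max\bigl\{120,\ J(\PGL_3(K)),\ J(\Aut(\mathbb{P}^1_K\times\mathbb{P}^1_K))\bigr\}$ via the degree-by-degree bounds, then read off the maximum in each arithmetic case from the cited computations of $J(\PGL_3(K))$ and $J(\Aut(\mathbb{P}^1_K\times\mathbb{P}^1_K))$, using the remark that $-1$ is a sum of two squares in $\mathbb{Q}(\sqrt{5},\sqrt{-7})$ and in $\mathbb{Q}(\omega)$ to rule out the conflicting subcases. The one step you assert without justification is that a \emph{rational} del Pezzo surface of degree $9$ has $\Aut(X)\simeq\PGL_3(K)$: such an $X$ is a priori only a Severi--Brauer surface, and the paper closes this by invoking the Lang--Nishimura theorem to produce a $K$-point and then Ch\^atelet's theorem to conclude $X\simeq\mathbb{P}^2_K$.
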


\begin{proof}
To calculate the value of $J_{dP}(K)$, one can ignore del Pezzo surfaces with Jordan constant of the automorphism group less or equal to 120 by Remark~\ref{rem: max dP >= 120}. These are all del Pezzo surfaces of degree $d\neq 2, 8, 9$ by Lemma~\ref{lemma: dP cases; not P1xP1} and rational del Pezzo surfaces of degree $8$ that are not isomorphic to $\mathbb{P}^1_K\times\mathbb{P}^1_K$, by Corollary~\ref{sled: dP 8}.

Let $X$ be a rational del Pezzo surface of degree $9$, then $X$ is isomorphic to~$\mathbb{P}^2_K$. Indeed, since $X$ is rational, there is a $K$-point on $X$ by the Lang--Nishimura theorem (see for example ~\mbox{\cite[Theorem 3.6.11]{Poo}}). Hence $X$ is a Severi--Brauer surface containing $K$-point. By Chatelet's theorem (see for example ~\mbox{\cite[Theorem 5.1.3]{Phil Gille}}), we obtain that $X$ is isomorphic to $\mathbb{P}^2_K$. In particular, the group~$\Aut(X)$ is isomorphic to the group $\PGL_3(K)$. Therefore, by Lemma~\ref{lemma: dP 2}, del Pezzo surfaces of degree $2$ can also be ignored when computing the value of~$J_{dP}(K)$. Therefore, the value of~$J_{dP}(K)$ can be rewritten as follows: $$J_{dP}(K) = \max(J(\PGL_3(K)), J(\Aut(\mathbb{P}^1_K \times\mathbb{P}^1_K)), 120).$$

Let $\sqrt{5}\in K$, and $-1$ is a sum of two squares in $K$. Then by Theorem~\ref{theo:theo P^1 x P^1} we have~$J(\Aut(\mathbb{P}^1_K\times\mathbb{P}^1_K)) = 7200$, at the same time~$J(\PGL_3(K)) \leqslant 360$ by Theorem~\ref{theo: J(GL)}. Hence~$J_{dP}(K) = 7200$.

Let $\sqrt{5}\in K$, but $-1$ is not a sum of two squares in $K$. 
Note that in this case neither~$\sqrt{-7}$ nor $\omega$ can lie in $K$ due to Remark~\ref{rem: -1 = a^2 + b^2 in special fields }. Then we have the inequality~$J(\PGL_3(K)) \leqslant 60$ by Theorem~\ref{theo: J(GL)}, and we have equality~$J(\Aut(\mathbb{P}^1_K \times \mathbb{P}^1_K)) = 8$ by Theorem~\ref{theo:theo P^1 x P^1}. Therefore~$J_{dP}(K) = 120$.

Let $\sqrt{5} \not\in K$ and $\sqrt{-7}\in K$. Then we have the inequality $J(\Aut(\mathbb{P}^1_K\times\mathbb{P}^1_K)) \leqslant 72$ by Theorem~\ref{theo:theo P^1 x P^1}, and the equality $J(\PGL_3(K)) = 168$ by Theorem ~\ref{theo: J(GL)}. Therefore~\mbox{$J_{dP}(K) = 168$}.

Let $\sqrt{5} \not\in K$ and $\sqrt{-7}\not\in K$. Then ~$J(\Aut(\mathbb{P}^1_K\times\mathbb{P}^1_K)) \leqslant 72$ by Theorem~\ref{theo:theo P^1 x P^1}, and~$J(\PGL_3(K)) \leqslant 60$ by Theorem ~\ref{theo: J(GL)}. Therefore~$J_{dP}(K) = 120$.

Thus, we have considered all possible fields of characteristic $0$, and the theorem is proved.\end{proof}

\section{Jordan constants of Cremona group of rank 2}\label{sect: Jordan constant of Cremona}

In this section we prove the main theorem and its corollary. But before that, let us make a very simple remark.

\begin{rem}\label{rem: J(Cr)<=7200}
Let $K$ be a field of characteristic $0$. Since the group $\Cr_2(K)$ is embedded in the group~$\Cr_2(\overline{K})$, then applying Theorem~\ref{theo: Egor}, we obtain an estimate $$J(\Cr_2(K)) \leqslant J(\Cr_2(\overline{K})) = 7200.$$
\end{rem}

Now everything is prepared to prove the main theorem.

\begin{proof}[of Theorem~\ref{theo: J(Cr_2)}]
Let $G$ be a finite subgroup in the group $\Cr_2(K)$. Regularizing the action of $G$ and taking $G$-equivariant resolution of singularities, we can assume that $G$ is a subgroup in the automorphism group of a smooth rational surface~$S$ (see~\mbox{\cite[Lemma 3.5]{D-I}}). In this case, $S$ contains a $K$-point according to the Lang--Nishimura theorem (see for example~\mbox{\cite[Theorem 3.6.11]{Poo}}). Further, applying the program of minimal models with an action of the group $G$, we can assume that $S$ is either a del Pezzo surface, or has a conic bundle structure $\pi: S\xrightarrow{}{} B$, and~$G\subset\Aut(S,\pi)$, see~\cite[Theorem 1G]{Isk}. Note that in the latter case, the curve $B$ is geometrically rational and contains a $K$-point, since $S$ contains a $K$-point. Hence, we have an isomorphism $B\simeq\mathbb{P}^1_K$.

Hence, the Jordan constant of the Cremona group of rank 2 can be rewritten in the following form:
$$J(\Cr_2(K)) = \max(J_{dP}(K), J_{cb}(K)).$$
Here $J_{dP}(K)$ is a constant, defined in~\eqref{eq Jdp}, and $J_{cb}(K)$ is a similar constant for conic bundles: $$J_{cb}(K) = \max\limits_{(Y,\pi_Y)}(J(\Aut(Y,\pi_Y))),$$ where pairs of $(Y,\pi_Y)$ run through all rational surfaces over $K$ together with a conic bundle structure over $\mathbb{P}^1_K$.

Note that for any field $K$ of characteristic zero, the inequality ~$J_{cb}(K)\leqslant J_{dP}(K)$ is satisfied. Indeed, for fields~$K$ in which either $\sqrt{5}$ does not lie, or $-1$ is not a sum of two squares we have $$J_{cb}(K) \leqslant 72 < 120\leqslant J_{dP}(K)$$ by Corollary~\ref{sled: J(Aut(conic bundles))} and Remark~\ref{rem: max dP >= 120}. If $\sqrt{5}\in K$, and $-1$ is a sum of two squares in $K$, then~$J_{dP}(K) = 7200$, and $J_{cb}(K)$ cannot exceed $7200$ by Remark~\ref{rem: J(Cr)<=7200}.

Now the assertion of the theorem directly follows from Proposition~\ref{prop: all Dp}.
\end{proof}

To prove Corollary~\ref{sled: Cr(k), Cr(R), Cr(Q)...} we prove an elementary auxiliary lemma.

\begin{lemma}\label{lemma:support}
Let $m$ and $n$ be two coprime square-free integers. Then $\sqrt{m}$ does not lie in the field $\mathbb{Q}(\sqrt{n})$.
\end{lemma}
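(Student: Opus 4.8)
The plan is to argue by contradiction using basic field theory over $\mathbb{Q}$. Suppose that $\sqrt{m} \in \mathbb{Q}(\sqrt{n})$. Since $n$ is square-free and not a perfect square (unless $n = 1$, a case I handle separately below), the extension $\mathbb{Q}(\sqrt{n})/\mathbb{Q}$ is a degree-$2$ extension with basis $\{1, \sqrt{n}\}$. Hence I may write $\sqrt{m} = a + b\sqrt{n}$ for some $a, b \in \mathbb{Q}$. Squaring gives $m = a^2 + b^2 n + 2ab\sqrt{n}$, and comparing the rational and irrational parts (using that $\sqrt{n} \notin \mathbb{Q}$) forces $2ab = 0$ and $m = a^2 + b^2 n$.

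Now I split into the two cases coming from $2ab = 0$. If $b = 0$, then $\sqrt{m} = a \in \mathbb{Q}$, so $m$ is a perfect square; since $m$ is square-free this means $m = 1$, contradicting that $m$ and $n$ are coprime square-free integers with, implicitly, $m \neq n$ — more precisely, one must observe that the hypothesis is vacuous or the statement trivially true when $m = 1$, so I assume $m \neq 1$, and then $b = 0$ is impossible. If $a = 0$, then $m = b^2 n$ with $b \in \mathbb{Q}$; writing $b = p/q$ in lowest terms gives $m q^2 = n p^2$, and since $\gcd(m,n) = 1$ and both are square-free, comparing prime factorizations forces $m = n = 1$ (any prime dividing $n$ divides $mq^2$ but not $m$, so divides $q^2$ hence $q$, and then divides $np^2$... one chases this to conclude $m \mid p^2$ and $n \mid q^2$ with $m,n$ square-free, giving $m = n = 1$), again a contradiction. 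The remaining degenerate case $n = 1$ is immediate: then $\mathbb{Q}(\sqrt{n}) = \mathbb{Q}$, and $\sqrt{m} \in \mathbb{Q}$ would again force $m$ square, hence $m = 1 = n$, excluded.

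The only genuinely delicate point is the bookkeeping in the case $a = 0$: one needs to be careful that "coprime and square-free" is exactly what rules out $m = b^2 n$ for rational $b$. Concretely, from $m q^2 = n p^2$ with $\gcd(p,q) = 1$, every prime $\ell \mid m$ has $v_\ell(m) = 1$, so $v_\ell(n) + 2 v_\ell(p) = 1 + 2 v_\ell(q)$ is odd; since $n$ is square-free, $v_\ell(n) \in \{0,1\}$, and parity forces $v_\ell(n) = 1$, i.e. $\ell \mid n$, contradicting $\gcd(m,n) = 1$ unless $m$ has no prime factors, i.e. $m = 1$. This is the heart of the argument, and everything else is routine comparison of coefficients in a degree-$2$ extension; no results beyond elementary algebra are needed.
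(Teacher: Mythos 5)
Your proof is correct and follows essentially the same route as the paper: write $\sqrt{m}=a+b\sqrt{n}$ with $a,b\in\mathbb{Q}$, deduce $ab=0$ (you by comparing coefficients in the basis $\{1,\sqrt{n}\}$, the paper by applying the Galois involution of $\mathbb{Q}(\sqrt{n})/\mathbb{Q}$ --- the same fact in two guises), and rule out both cases using coprimality together with square-freeness. Your explicit attention to the degenerate cases $m=1$ and $n=1$ is, if anything, slightly more careful than the paper's own write-up.
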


\begin{proof} Since $m$ and $n$ are square-free integers, the numbers $\sqrt{m}$ and $\sqrt{n}$ are irrational. Suppose that $\sqrt{m}$ lies in $\mathbb{Q}(\sqrt{n})$. Then there are $a, b, c, d\in \mathbb{Z}$ such that $$\Bigl(\frac{a}{b} + \frac{c}{d}\sqrt{n}\Bigl)^2 = m,$$ and we can assume that fractions are irreducible. Let us act by Galois involution of quadratic extension $\mathbb{Q} \subset \mathbb{Q}(\sqrt{n})$ on both sides of the equality. We obtain: 
$$\Bigl(\frac{a}{b} - \frac{c}{d}\sqrt{n}\Bigl)^2 = m.$$
It follows form this two equalities that either $a = 0$, or~$c = 0$. If~$a = 0$, we get the equality~$nc^2 = md^2$, therefore,~$c$ is divisible by $m$, but then $d$ is divisible by $m$, and we got a contradiction with the irreducibility of the fraction $\frac{c}{d}$. If we assume that $c = 0$, then we immediately get a contradiction with the irrationality of the number $\sqrt{m}$. Thus, we got contradictions in all cases, so $\sqrt{m}\notin\mathbb{Q}(\sqrt{n})$. \end{proof}

\begin{proof}[of Corollary~\ref{sled: Cr(k), Cr(R), Cr(Q)...}.] 
For fields $\mathbb{C}$, $\mathbb{R}$ and $\mathbb{Q}$, the assertion is guaranteed by Theorem~\ref{theo: Egor} and this is consistent with Theorem~\ref{theo: J(Cr_2)}. Applying Remark~\ref{rem: -1 = a^2 + b^2 in special fields } and Theorem~\ref{theo: J(Cr_2)}, we get $$J(\Cr_2(\mathbb{Q}(\sqrt{5}, \sqrt{-7}))) = J(\Cr_2(\mathbb{Q}(\sqrt{5}, \omega))) = 7200.$$ By Lemma~\ref{lemma:support}, the number $\sqrt{5}$ does not lie in $\mathbb{Q}(\sqrt{-7})$. So, by Theorem~\ref{theo: J(Cr_2)} we have $$J(\Cr_2(\mathbb{Q}(\sqrt{-7}))) = 168.$$ Finally, since $-1$ is not a sum of two squares in $\mathbb{Q}(\sqrt{5})$, then by Theorem~\ref{theo: J(Cr_2)} we have $$J(\Cr_2(\mathbb{Q}(\sqrt{5}))) = 120.$$
\end{proof}

\end{document}